\newcommand{\qed}{\hfill $\square$\\
\vspace{0.1cm}}
\newtheorem{theorem}{Theorem}[section]
\newtheorem{definition}[theorem]{Definition}
\newtheorem{lemma}[theorem]{Lemma}
\newtheorem{proposition}[theorem]{Proposition}
\newtheorem{conjecture}{Conjecture}[section]
\newenvironment{proof}{\noindent{\em Proof.}}{\qed}
\begin{document}

\title{Ramsey Properties for $V$-shaped Posets in the Boolean Lattices}

\author{
Hong-Bin Chen
\thanks{Department of Applied Mathematics, National Chung Hsing University, Taichung 40227, Taiwan
{\tt Email:andanchen@gmail.com} supported by MOST-110-2115-M-005-006 - }
\and
Wei-Han Chen
\thanks{Department of Applied Mathematics, National Chung Hsing University, Taichung 40227, Taiwan
{\tt Email:sevenak47@gmail.com}}
\and
Yen-Jen Cheng
\thanks{Department of Mathematics, National Taiwan Normal University, Taipei 116, Taiwan
{\tt Email:yjc7755@gmail.com} supportes by MOST- 110-2811-M-A49-505}
\and
Wei-Tian Li
\thanks{Department of Applied Mathematics, National Chung Hsing University, Taichung 40227, Taiwan
{\tt Email:weitianli@nchu.edu.tw} supported by MOST-109-2115-M-005 -002 -}
\and
Chia-An Liu
\thanks{Department of Mathematics, Soochow University, Taipei 11102, Taiwan
{\tt Email:liuchiaan8@gmail.com}
supported by MOST-109-2115-M-031-006-MY2
}
}

\date{\small \today}

\maketitle

\begin{abstract}
Given posets $\mathbf{P}_1,\mathbf{P}_2,\ldots,\mathbf{P}_k$, 
let the {\em Boolean Ramsey number} $R(\mathbf{P}_1,\mathbf{P}_2,\ldots,\mathbf{P}_k)$ be the minimum number $n$ such that no matter how we color the elements in the Boolean lattice $\mathbf{B}_n$ with $k$ colors, 
there always exists a poset $\mathbf{P}_i$ contained in $\mathbf{B}_n$ whose elements are all colored with $i$. 
This function was first introduced by Axenovich and Walzer~\cite{AW}. Recently, many results on determining $R(\mathbf{B}_m,\mathbf{B}_n)$ have been published. In this paper, we will study the function  $R(\mathbf{P}_1,\mathbf{P}_2,\ldots,\mathbf{P}_k)$ for each $\mathbf{P}_i$'s being the $V$-shaped poset. That is, a poset obtained by identifying the minimal elements of two chains. 

Another major result presented in the paper is to determine the minimal posets $\mathbf{Q}$ contained in $\mathbf{B}_n$, when $R(\mathbf{P}_1,\mathbf{P}_2,\ldots,\mathbf{P}_k)=n$ is determined, having the Ramsey property described in the previous paragraph.

In addition, we define the {\em Boolean rainbow Ramsey number} $RR(\mathbf{P},\mathbf{Q})$ the minimum number $n$ such that when arbitrarily  coloring the elements in $\mathbf{B}_n$, there always exists either a monochromatic $\mathbf{P}$ or a rainbow $\mathbf{Q}$ contained in $\mathbf{B}_n$. 
The upper bound for $RR(\mathbf{P},\mathbf{A}_k)$ was given by Chang, Li, Gerbner, Methuku, Nagy, Patkos, and Vizer for general poset $\mathbf{P}$ and $k$-element antichain $\mathbf{A}_k$. We study the function for $\mathbf{P}$ being the $V$-shaped posets in this paper as well.
\end{abstract}

{\bf keywords:} posets, Boolean lattices, Ramsey number, rainbow coloring.

\section{Introduction}

Ramsey theory is a research topic on finding the substructure of a large mathematics structure when partitioning the set of the objects in the large structure into subsets.  
The structures that have been studied including the set systems (hypergraphs), the set of integers, the Euclidean spaces, and the general posets~\cite{AGLM,CFS1,CFS2,GRS,T99}. 
The investigation of the Ramsey property on posets was initialled by N\u{e}set\u{r}il and R\"{o}dl~\cite{NR} and Kierstead and Trotter~\cite{KT}. 
As a convention, we use the capital letters $A$, $B,\ldots$, the boldface letters $\mathbf{A}$, $\mathbf{B},\ldots$, and the scribble letters $\mathcal{A}$, $\mathcal{B},\ldots$ to denote sets, posets, and families of sets in this paper, respectively.
A {\em poset} $\mathbf{P}=(S,\le)$ is a set $S$ associated with a partial ordering on the pairs of elements in $S$. Two elements $x$ and $y$ in $S$ are said to be {\em comparable} if either $x\le_\mathbf{P}y$ or $y\le_\mathbf{P}x$ holds; otherwise, they are {\em incomparable}.  We say a poset $\mathbf{Q} $ contains a poset $\mathbf{P}$ as a {\em subposet} (or $\mathbf{Q} $ contains a copy of $\mathbf{P}$) if there exists an injection $\phi:S_\mathbf{P}\rightarrow S_\mathbf{Q}$ such that $x\le_\mathbf{P} y$ in $\mathbf{P}$ if and only if  $\phi(x)\le_\mathbf{Q} \phi(y) $ in $\mathbf{Q}$.
While if the above injection only preserves the partial ordering  
($\phi:S_\mathbf{P}\rightarrow S_\mathbf{Q}$ and $x\le_\mathbf{P} y$ in $\mathbf{P}$ implies  $\phi(x)\le_\mathbf{Q} \phi(y) $ in $\mathbf{Q}$), 
then we say $\mathbf{Q}$ contains $\mathbf{P}$ as a {\em weak subposet}.
Without specific mentioning, the subposets in the paper will be the former version.
Two posets are {\em isomorphic} if each of them contains the other one as a subposet. 

Let $p$ be a poset parameter such as cardinality, height, width, etc. 
Kierstead and Trotter~\cite{KT} studied the function $f_p(n)$ that is the least integer so that 
for every poset $\mathbf{P}$ with $p(\mathbf{P})=n$ there exists a poset $\mathbf{Q}$ with $p(\mathbf{Q})=f_p(n)$ such that whenever the elements of $\mathbf{Q}$ are colored with red or blue, it always contains a copy of $\mathbf{P}$ whose elements are all of the same color.
Along this line, Axenovich and Walzer~\cite{AW} studied the following poset Ramsey numbers for the Boolean lattices. A {\em Boolean lattice} is a poset whose underlying set consists of all subsets of $[n]:=\{1,2,\ldots,n\}$ as the elements and the inclusion relation among sets as the partial order relation, denoted as $\mathbf{B}_n=(2^{[n]},\subseteq)$.

\begin{definition}
Given posets $\mathbf{P}_1,\mathbf{P}_2,\ldots,\mathbf{P}_k$, let $R(\mathbf{P}_1,\mathbf{P}_2,\ldots,\mathbf{P}_k)$ be the minimum number $n$ such that 
for every $k$-coloring $c:2^{[n]}\rightarrow [k]$, $\mathbf{B}_n$ contains a monochromatic $\mathbf{P}_i$ of color $i$ as a subposet. If $\mathbf{P}_1=\cdots=\mathbf{P}_k=\mathbf{P}$, we may denote $R(\mathbf{P}_1,\mathbf{P}_2,\ldots,\mathbf{P}_k)$ with $R_k(\mathbf{P})$ for short.
\end{definition}
The {\em 2-dimension} of a poset $\mathbf{P}$, defined by Trotter~\cite{T75} and denoted by $\dim_2(\mathbf{P})$, is the least integer $n$ for which $\mathbf{B}_n$ contains $\mathbf{P}$ as a subposet. Note that the value of $R(\mathbf{P}_1,\mathbf{P}_2)$ for $\mathbf{P}_1=\mathbf{P}_2=\mathbf{B}_n$ is indeed the same as $f_{\dim_2}(n)$.
Thus, it provides the motivation to study $R(\mathbf{B}_n,\mathbf{B}_n)$. 
Besides, the problem of determining the value of $R(\mathbf{B}_m,\mathbf{B}_n)$ for general $m$ and $n$ seems to be an interesting problem on its own right, and it has drawn a lot of attention. There are several papers~\cite{AW,BP,GMT,LT} containing the results on this aspect.
Meanwhile, the weak subposet version of poset Ramsey number was proposed and investigated by Cox and Stolee~\cite{CS}. 
Moreover, Falgas-Ravry, Markstr\"{o}m, Treglown, and Zhao used the probability method to study the Ramsey problems in the Boolean lattices in a recent paper~\cite{FMTZ}.

In the following we present our results in this paper. 
Since one can study the function $R(\mathbf{P}_1,\mathbf{P}_2,\ldots,\mathbf{P}_k)$ for other poset parameters, we call the one in the paper the {\em Boolean Ramsey number} (This term is also used in the article of Cox and Stolee~\cite{CS}). 
Let $m$ and $n$ be two integers with $n\ge m\ge 1$, and let $\mathbf{V}_{m,n}$ denote the poset on $m+n+1$ elements $x$, $y_1\ldots, y_m$, and $z_1\ldots, z_n$ with the partial order relations $x<y_1<y_2<\cdots <y_m$ and $x<z_1<z_2<\cdots <z_n$. The paper determines the exact values of the Boolean Ramsey number for $\mathbf{V}_{m,n}$ with some given conditions on $m$ and $n$.

\begin{theorem}[identical type]\label{IT}
Let $k\ge 1$ and $n\ge m\ge 1$. If we have 
(1) $m=n=1$, or
(2) $m<n$,
then \[R_k(\mathbf{V}_{m,n})=nk+1.\]
\end{theorem}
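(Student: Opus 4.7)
\noindent The argument splits into a lower-bound construction and an upper-bound induction on $k$.

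For the lower bound $R_k(\mathbf{V}_{m,n}) \ge nk+1$, I exhibit a $k$-coloring of $\mathbf{B}_{nk}$ with no monochromatic $\mathbf{V}_{m,n}$. Define $c(A) = i$ whenever $(i-1)n \le |A| \le in-1$ for $i \in \{1, \ldots, k\}$, and set $c([nk]) = k$. For $i < k$, the color class occupies only $n$ consecutive cardinality levels, so its longest chain has at most $n$ elements; since every embedding of $\mathbf{V}_{m,n}$ requires a chain of $n+1$ elements (namely $x < z_1 < \cdots < z_n$), no embedding fits in color $i$. For $i = k$, the class occupies $n+1$ levels and can admit chains of length $n+1$, but every such chain must terminate at $[nk]$; the secondary chain $y_1 < \cdots < y_m$ of any $\mathbf{V}_{m,n}$ would then have $y_m \subseteq [nk] = z_n$, contradicting incomparability.

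For the upper bound, I induct on $k$. The base $k=1$ asks that $\mathbf{V}_{m,n}$ embed in $\mathbf{B}_{n+1}$: take $x = \emptyset$, $z_i = \{1, \ldots, i\}$ for $1 \le i \le n$, and $y_j = \{n+1\} \cup \{2, \ldots, j\}$ for $1 \le j \le m$; the two chains are incomparable above $\emptyset$ since $1 \in z_i \setminus y_j$ and $n+1 \in y_j \setminus z_i$. For the inductive step from $k-1$ to $k$, given any $k$-coloring of $\mathbf{B}_{nk+1}$, I inspect each principal filter $\{S : T \subseteq S\} \cong \mathbf{B}_{n(k-1)+1}$ for $T \in \binom{[nk+1]}{n}$. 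If some such filter uses at most $k-1$ colors, the inductive hypothesis produces a monochromatic $\mathbf{V}_{m,n}$ inside it and we are done. Otherwise every such filter uses all $k$ colors, and I argue directly: on the maximal chain $\emptyset \subsetneq \{1\} \subsetneq \cdots \subsetneq [nk]$ inside the sub-lattice $\mathbf{B}_{nk}$, pigeonhole yields a color $r$ with at least $n+1$ elements $c_{j_0} < \cdots < c_{j_n}$, and because each $j_i \le nk$ we have $z_n := c_{j_n} \ne [nk+1]$. Setting $x = c_{j_0}$ and $z_i = c_{j_i}$, the candidate chain $y_\ell := x \cup \{nk+1\} \cup \{j_1+1, \ldots, j_1+\ell-1\}$ for $\ell = 1, \ldots, m$ is incomparable to every $z_i$, since $nk+1 \in y_\ell \setminus z_i$ and $j_0+1 \in z_i \setminus y_\ell$.

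The main obstacle is promoting this candidate $y$-chain into color $r$. Its set-theoretic shape is forced by the construction, but the colors of the individual $y_\ell$ lie outside the pigeonhole's direct control. The assumption that every $n$-filter meets every color should let one replace each $y_\ell$ by a color-$r$ representative lying in the same legal region (still above $x$, still incomparable to all $z_i$), and then splice these representatives into a genuine chain $y_1 \subsetneq \cdots \subsetneq y_m$ of color $r$. Executing this simultaneous replacement while preserving both the chain relations and the $(m+n)$ incomparabilities is the technical heart of the argument; the hypothesis $m < n$ (or the small case $m = n = 1$) supplies precisely the slack this selection requires, and the inability to provide that slack when $m = n \ge 2$ is what restricts the theorem to the stated parameter range.
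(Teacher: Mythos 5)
Your lower-bound coloring and your base case $k=1$ are both correct, and the lower bound is essentially the paper's own construction (color by $\lfloor |X|/n\rfloor$-style bands and observe that a monochromatic $(n+1)$-chain must end at $[nk]$, which cannot be an element of a copy of $\mathbf{V}_{m,n}$). The inductive step, however, contains a genuine and explicitly acknowledged gap. In the case where every principal filter over an $n$-set uses all $k$ colors, you build a pigeonholed monochromatic chain $x=c_{j_0}\subsetneq\cdots\subsetneq c_{j_n}$ and a \emph{candidate} $y$-chain of the right set-theoretic shape, and then defer ``promoting this candidate $y$-chain into color $r$'' as the ``technical heart of the argument.'' That deferred step is the entire difficulty, and nothing you have set up delivers it: the hypothesis that every filter meets all $k$ colors gives no control over where color $r$ sits relative to your chosen maximal chain, and there is no reason the color-$r$ sets lying above $x$ and incomparable to every $z_i$ should contain an $m$-chain. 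Your closing claim that $m<n$ ``supplies precisely the slack this selection requires'' is not supported by any computation; indeed your argument never uses $m<n$ in any identifiable place, which is a warning sign since the theorem is only asserted under that hypothesis (or $m=n=1$).

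For contrast, the paper's inductive step works with $\mathcal{F}_\varnothing$, the color class of $\varnothing$, rather than with filters. If $\mathcal{F}_\varnothing-\{\varnothing,[nk+1]\}$ contains an $n$-chain, that chain lies in some $\mathbf{B}_{nk+1}^{i,j}\cong\mathbf{B}_{nk-1}$; then either $\mathbf{B}_{nk+1}^{j,i}\cap\mathcal{F}_\varnothing$ contains an $m$-chain, giving a monochromatic $\mathbf{V}_{m,n}$ with apex $\varnothing$ directly, or by Mirsky's theorem it decomposes into at most $m-1$ antichains whose removal (Lemma~\ref{antichain}) leaves a copy of $\mathbf{B}_{nk-m}$ colored with only $k-1$ colors, and the induction applies because $nk-m\ge n(k-1)+1$ \emph{exactly when} $m\le n-1$ --- this inequality is where the hypothesis $m<n$ does its work. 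If instead $\mathcal{F}_\varnothing$ has no $n$-chain, Mirsky gives at most $n$ antichains, and their removal leaves a $\mathbf{B}_{n(k-1)+1}$ in $k-1$ colors. (The case $m=n=1$ gets a separate, simpler chain-removal induction.) To repair your proof you would either need to carry out the replacement-and-splicing step you describe --- which I do not believe can be done from the information you have extracted --- or switch to an argument of this antichain-decomposition type.
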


\begin{theorem}[mixed type]\label{MT}
Let $m,n\ge 1$. We have 
\[R(\mathbf{V}_{m,m},\mathbf{V}_{n,n})=m+n+1.\]
\end{theorem}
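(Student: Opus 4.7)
The plan is to prove both directions of the equality separately. For the lower bound $R(\mathbf{V}_{m,m},\mathbf{V}_{n,n}) \ge m+n+1$, I exhibit a $2$-coloring of $\mathbf{B}_{m+n}$ avoiding both monochromatic substructures: assign color $1$ to every $A \subseteq [m+n]$ with $|A| < m$ and color $2$ otherwise. Color class $1$ contains only chains of length at most $m$, but any copy of $\mathbf{V}_{m,m}$ requires a chain of length $m+1$ (its root together with either ascending branch), so no monochromatic $\mathbf{V}_{m,m}$ lies in color class $1$. A hypothetical root $x$ of a $\mathbf{V}_{n,n}$ in color class $2$ must satisfy $|x| \ge m$; both of its $n$-chains above $x$ would then have to terminate at level $|x|+n \ge m+n$, i.e.\ at $[m+n]$, the unique set of that level, forcing the two chain tops to coincide and violating the distinctness required in $\mathbf{V}_{n,n}$.

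For the upper bound $R(\mathbf{V}_{m,m},\mathbf{V}_{n,n}) \le m+n+1$, I proceed by induction on $m+n$. The base case $m=n=1$ amounts to $R(\mathbf{V}_{1,1},\mathbf{V}_{1,1}) \le 3$, which I verify by direct analysis of $\mathbf{B}_3$, using the observation that a $\mathbf{V}_{1,1}$-free subposet must have the upset of each element being a chain. For the inductive step with $m+n \ge 3$, assume without loss of generality $m \ge 2$, let $N = m+n+1$, and consider any $2$-coloring of $\mathbf{B}_N$. Form the two sub-Boolean lattices $\mathcal{L}_0 = \{A \subseteq [N]: 1 \notin A\}$ and $\mathcal{L}_1 = \{A \subseteq [N]: 1 \in A\}$, each isomorphic to $\mathbf{B}_{m+n}$. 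Applying the inductive hypothesis $R(\mathbf{V}_{m-1,m-1},\mathbf{V}_{n,n}) = m+n$ to each $\mathcal{L}_i$: either we obtain a monochromatic $\mathbf{V}_{n,n}$ in color $2$ (done), or each $\mathcal{L}_i$ contains a monochromatic $\mathbf{V}_{m-1,m-1}$ in color $1$. Denote the copy in $\mathcal{L}_0$ by $(x;\, y_1 \subsetneq \cdots \subsetneq y_{m-1};\, z_1 \subsetneq \cdots \subsetneq z_{m-1})$, and set $y_m := y_{m-1} \cup \{1\}$, $z_m := z_{m-1} \cup \{1\}$; when both are colored $1$, a direct check (using $1 \notin z_j$ to deduce $z_j \subseteq y_m \iff z_j \subseteq y_{m-1}$, and analogous statements for the other pairs) shows that this yields a monochromatic $\mathbf{V}_{m,m}$ in color $1$ inside $\mathbf{B}_N$.

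The principal obstacle is to handle the case where at least one of $y_m, z_m$ is colored $2$, so the straightforward extension fails. In this case, one combines the resulting color-$2$ element with the monochromatic $\mathbf{V}_{m-1,m-1}$ obtained in $\mathcal{L}_1$ and, when $n \ge 2$, with monochromatic $\mathbf{V}_{n-1,n-1}$ copies in color $2$ obtained by applying the inductive hypothesis $R(\mathbf{V}_{m,m},\mathbf{V}_{n-1,n-1}) = m+n$ to each $\mathcal{L}_i$ (the residual $n=1$ sub-case is handled by a parallel direct argument). These ingredients must then be assembled, via a delicate case analysis, into either an alternative color-$1$ extension producing $\mathbf{V}_{m,m}$ or a direct construction of $\mathbf{V}_{n,n}$ in color $2$ in $\mathbf{B}_N$. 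This case analysis is the technically delicate heart of the inductive step, and the step I expect to be hardest to carry out cleanly.
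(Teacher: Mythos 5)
Your lower bound coincides with the paper's and is correct. The upper bound, however, has a genuine gap: the whole difficulty of the theorem is concentrated in the case you defer to ``a delicate case analysis,'' and that analysis is neither carried out nor clearly feasible as you describe it. Concretely, when the color-$1$ copy of $\mathbf{V}_{m-1,m-1}$ in $\mathcal{L}_0$ fails to extend because, say, $y_m=y_{m-1}\cup\{1\}$ has color $2$, you propose to combine this single color-$2$ element with a color-$2$ copy of $\mathbf{V}_{n-1,n-1}$ coming from another inductive instance. But promoting a $\mathbf{V}_{n-1,n-1}$ to a $\mathbf{V}_{n,n}$ requires \emph{two} new color-$2$ elements, each containing the top of one branch while remaining incomparable to every element of the other branch; a stray color-$2$ set $y_m$ has no a priori relation to where that $\mathbf{V}_{n-1,n-1}$ sits in $\mathbf{B}_N$. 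Nothing in the outline explains how these ingredients assemble, the residual $n=1$ sub-case is only asserted, and so what you have is a plan rather than a proof.

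The paper closes exactly this hole by a different decomposition, which is worth comparing with yours. Instead of splitting on membership of the element $1$, it examines the colors of the co-singletons $\{i\}^c$ relative to $c(\varnothing)$. If some $\{i\}^c$ has the color of $\varnothing$, Lemma~\ref{VMN} produces the required monochromatic poset directly, working inside the sublattices $\mathbf{B}_{m+n+1}^{i,j}$ and using Mirsky's theorem together with the antichain lemma. Otherwise every co-singleton has the color opposite to $\varnothing$ (say color $2$ while $c(\varnothing)=1$); deleting all of them leaves a copy of $\mathbf{B}_{m+n}$, induction with parameters $(m,n-1)$ applies, and a color-$2$ copy of $\mathbf{V}_{n-1,n-1}$ with branches $W_1\subset\cdots\subset W_{n-1}$ and $Z_1\subset\cdots\subset Z_{n-1}$ is extended by the \emph{two} co-singletons $\{z\}^c\supset W_{n-1}$ and $\{w\}^c\supset Z_{n-1}$, where $w\in W_1-Z_{n-1}$ and $z\in Z_1-W_{n-1}$. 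These two sets are automatically color $2$ and are precisely the pair of incomparable top extensions your outline is missing; supplying an analogue of this device (or of Lemma~\ref{VMN}) is what your inductive step still needs.
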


From the definition of $f_{\dim_2}(n)$, 
it is nature to ask the question: Given a poset $\mathbf{P}$ with  
$\dim_2(\mathbf{P})=n$, what is the poset $\mathbf{Q}$ with $\dim_2(\mathbf{Q})=f(n)$
that contains a monochromatic $\mathbf{P}$ as a subposet for every 2-coloring of $\mathbf{Q}$?
Analogously, despite that $R(\mathbf{P}_1,\mathbf{P}_2,\ldots,\mathbf{P}_k)=n$ implies that $\mathbf{B}_n$ is a poset with the smallest 2-dimension  containing a $\mathbf{P}_i$ of color $i$ for every $k$-coloring of $\mathbf{B}_n$, we are interested in finding other posets $\mathbf{Q}$ with $\dim_2(\mathbf{Q})=n$ having the same property. 
Let $n=R(\mathbf{P}_1,\mathbf{P}_2,\ldots,\mathbf{P}_k)$.
A subposet $\mathbf{Q}\subset\mathbf{B}_n$ is called {\em minimal $(\mathbf{P}_1,\mathbf{P}_2,\ldots,\mathbf{P}_k)$-Ramsey} (or {\em minimal $(\mathbf{P};k)$-Ramsey} when $\mathbf{P}_i=\mathbf{P}$ for all $i$) if every $k$-coloring of $\mathbf{Q}$ contains a monochromatic $\mathbf{P}_i$ of color $i$ for some $i$, but for any subposet $\mathbf{Q}'\subset \mathbf{Q}$ with $|\mathbf{Q}'|=|\mathbf{Q}|-1$,  there exists a $k$-coloring that avoids any
monochromatic $P_i$ of color $i$. 
We determine the minimal posets for the following two cases.

\begin{theorem}\label{V1k}
The poset 
\[\mathbf{B}_{k+1}-\{[k+1]\}\] is the unique minimal $(\mathbf{V}_{1,1};k)$-Ramsey poset.
\end{theorem}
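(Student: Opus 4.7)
The plan is to organize the proof around one key structural observation about the top element $[k+1]$, and then carry out three tasks: verifying the Ramsey property of $\mathbf{B}_{k+1} - \{[k+1]\}$, proving its minimality, and deducing uniqueness. The \emph{key observation} is that $[k+1]$ cannot appear as any vertex of a $\mathbf{V}_{1,1}$ inside $\mathbf{B}_{k+1}$: nothing lies strictly above $[k+1]$ (so it cannot be the bottom vertex $x$), and every other subset is comparable to $[k+1]$ (so it cannot be one of the two incomparable upper vertices $y_1$ or $z_1$).

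Combining this with Theorem~\ref{IT}, which gives $R_k(\mathbf{V}_{1,1}) = k+1$, immediately establishes the Ramsey property: any $k$-coloring of $\mathbf{B}_{k+1} - \{[k+1]\}$, extended arbitrarily to $[k+1]$, produces a monochromatic $\mathbf{V}_{1,1}$ in $\mathbf{B}_{k+1}$ that by the observation must already sit inside $\mathbf{B}_{k+1} - \{[k+1]\}$. The same observation begins the uniqueness argument: any minimal $(\mathbf{V}_{1,1}; k)$-Ramsey subposet $\mathbf{Q} \subseteq \mathbf{B}_{k+1}$ must omit $[k+1]$ (otherwise removing $[k+1]$ would leave a smaller Ramsey subposet, contradicting minimality), hence $\mathbf{Q} \subseteq \mathbf{B}_{k+1} - \{[k+1]\}$. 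Uniqueness will then follow from the minimality of $\mathbf{B}_{k+1} - \{[k+1]\}$ itself, since any proper subposet $\mathbf{Q} \subsetneq \mathbf{B}_{k+1} - \{[k+1]\}$ would omit some $A$ and inherit the $k$-coloring constructed next.

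The technical core is therefore to exhibit, for each $A \in \mathbf{B}_{k+1} \setminus \{[k+1]\}$, a $k$-coloring of $\mathbf{B}_{k+1} - \{[k+1], A\}$ with no monochromatic $\mathbf{V}_{1,1}$. When $A = \emptyset$, I set $c(X) = |X|$: since $|X| \in \{1, 2, \ldots, k\}$, each color class is an antichain of equal-size subsets, trivially $\mathbf{V}_{1,1}$-free. When $A \ne \emptyset$, the element $\emptyset$ is still present and sits below every other remaining element, so the color class containing $\emptyset$ must be a chain. My plan is to pick a maximal chain $\mathcal{C}_0$ from $\emptyset$ up to a co-atom while avoiding $A$---possible because at each rank the current chain vertex has at least two supersets in the remaining poset---assign $\mathcal{C}_0$ color $1$, and use colors $2, \ldots, k$ on the residual poset. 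Because color $1$ is unique to $\mathcal{C}_0$ and $\mathcal{C}_0$ itself is a chain (hence contains no $\mathbf{V}_{1,1}$), no monochromatic $\mathbf{V}_{1,1}$ can straddle $\mathcal{C}_0$ and the residual, reducing the task to a $\mathbf{V}_{1,1}$-free $(k-1)$-coloring of the residual. This residual coloring is the \emph{main obstacle}: I would handle it by induction on $k$ combined with a case analysis on $|A|$, exploiting the symmetry of $\mathbf{B}_{k+1}$ under relabelings of $[k+1]$ to reduce to canonical positions of $A$. The hardest sub-cases arise for $A$ of intermediate rank, where the residual poset does not reduce cleanly to a smaller Boolean lattice and may need an explicit construction matching each element to an appropriate color via a carefully chosen in-tree decomposition (the structural form of $\mathbf{V}_{1,1}$-free posets, since a poset is $\mathbf{V}_{1,1}$-free iff the upset of every element is a chain).
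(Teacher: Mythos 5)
Your skeleton matches the paper's: the observation that $[k+1]$ cannot occur in any copy of $\mathbf{V}_{1,1}$, the deduction of the Ramsey property from Theorem~\ref{IT}, and the reduction of uniqueness to exhibiting, for each $A\in\mathbf{B}_{k+1}-\{[k+1]\}$, a $\mathbf{V}_{1,1}$-free $k$-coloring of $\mathbf{B}_{k+1}-\{[k+1],A\}$ are all correct and are exactly how the paper frames the proof. The problem is that this last step is the entire content of the theorem, and you have not actually produced the colorings for $A\neq\varnothing$; you only describe a strategy and yourself flag it as the ``main obstacle.''

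Moreover, the strategy as stated has a concrete flaw. If you color a maximal chain $\mathcal{C}_0$ from $\varnothing$ to a co-atom $[k+1]-\{j\}$ with color $1$ and then use only colors $2,\ldots,k$ on the residual, note that the residual contains the up-set $\{X: j\in X,\ X\neq[k+1]\}$, which is a copy of $\mathbf{B}_{k}-\{\text{top}\}$. Whenever $j\notin A$ this copy survives intact in $\mathbf{B}_{k+1}-\{[k+1],A\}-\mathcal{C}_0$, and by the very theorem you are proving (applied with $k-1$ colors) \emph{every} $(k-1)$-coloring of it contains a monochromatic $\mathbf{V}_{1,1}$. So your plan can only work if the chain is forced to terminate at $[k+1]-\{j\}$ for some $j\in A$, a constraint you do not impose; and even then the residual strictly contains the inductive copy $\mathbf{B}_k-\{\text{top},A\}$ plus many sets not containing $j$, so the induction does not close without further argument. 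The paper sidesteps all of this with a single explicit closed-form coloring: for $S=A\neq\varnothing$ (say $S=[s]$), color $X$ with $|X|$ if $S\subsetneq X$, with $|X|+1$ if $S\not\subseteq X$ and $|X|\le k-1$, and color $[k+1]-\{i\}$ with $i$ for $1\le i\le s$; each color class is then checked in two lines to be $\mathbf{V}_{1,1}$-free (two same-colored sets of size $i$ both contain $S$, hence so does their intersection, while the candidate bottom of size $i-1$ does not). You would need either this construction or a correct completion of your chain-plus-induction scheme for the proof to stand.
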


If the underlying Boolean lattice $\mathbf{B}_n$ is given, we denote $X^c=[n]-X$ the complementary set of $X\in \mathbf{B}_n$.

\begin{theorem}\label{V2V1}
A poset $\mathbf{P}$ is minimal $(\mathbf{V}_{1,1},\mathbf{V}_{2,2})$-Ramsey if and only if it is isomorphic to 
\[\mathbf{B}_4-\{S_1,S_2,[4]\}\]
for some pairs of nonempty $S_1$ and $S_2$ in $\mathbf{B}_4-\{[4]\}$ with $S_1=S_2^c$, or $|S_1|=|S_2|=2$.
\end{theorem}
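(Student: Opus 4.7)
The plan is to prove both directions of the stated equivalence within the ambient Boolean lattice $\mathbf{B}_4$, which is the relevant one since $R(\mathbf{V}_{1,1},\mathbf{V}_{2,2})=4$ by Theorem~\ref{MT}. First we establish two preliminary reductions on the missing elements of any minimal Ramsey $\mathbf{Q}$, and then we carry out a case analysis to verify both sufficiency and uniqueness.

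The first reduction is that $[4]\notin\mathbf{Q}$ for every minimal Ramsey $\mathbf{Q}$. The key observation is that $[4]$ cannot appear in any copy of $\mathbf{V}_{1,1}$ in $\mathbf{B}_4$: it is maximal (so not a bottom) and any set that could be its incomparable sibling top is in fact contained in $[4]$ (so comparable, contradicting the required incomparability of tops). Consequently, coloring $[4]$ with color $1$ never creates a monochromatic $\mathbf{V}_{1,1}$. If $[4]\in\mathbf{Q}$ then, by minimality, $\mathbf{Q}-\{[4]\}$ admits an avoiding $2$-coloring; extending by $c([4])=1$ yields an avoiding coloring of $\mathbf{Q}$, contradicting its Ramsey property. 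Likewise, $\emptyset\in\mathbf{Q}$: the coloring of $\mathbf{B}_4-\{\emptyset,[4]\}$ that assigns color~$1$ to the singletons and color~$2$ to all sets of size $\geq 2$ avoids both targets (color-$1$ singletons are pairwise incomparable, and any color-$2$ chain of length $3$ would need a size-$4$ set, which is absent), and this coloring restricts to any $\mathbf{Q}\subseteq \mathbf{B}_4-\{\emptyset,[4]\}$ to give an avoiding coloring.

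For sufficiency, fix $\mathbf{P}=\mathbf{B}_4-\{S_1,S_2,[4]\}$ with $(S_1,S_2)$ satisfying one of the stated conditions and verify both the Ramsey property and minimality. For the Ramsey property, given any $2$-coloring $c$ we split on $c(\emptyset)$. When $c(\emptyset)=1$, any two incomparable color-$1$ sets together with $\emptyset$ form a $\mathbf{V}_{1,1}$, so all color-$1$ sets lie on a chain through $\emptyset$ of length at most $4$; hence at least nine elements receive color $2$, and a direct subcase check (on the position of the color-$1$ chain) locates a $\mathbf{V}_{2,2}$ among the color-$2$ elements. When $c(\emptyset)=2$, we seek a $\mathbf{V}_{2,2}$ rooted at $\emptyset$ using two incomparable length-$2$ chains in color $2$; the conditions $S_1=S_2^c$ or $|S_1|=|S_2|=2$ are exactly what preserves enough such incomparable chains after the removal of $\{S_1,S_2,[4]\}$ to force a monochromatic copy, given that avoiding color-$1$ $\mathbf{V}_{1,1}$'s limits what color $1$ can absorb. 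For minimality, for each $X\in\mathbf{P}$ we exhibit an avoiding $2$-coloring of $\mathbf{P}-\{X\}$; exploiting the subgroup of $S_4$ preserving $\{S_1,S_2\}$ reduces this to a handful of representative cases across the three isomorphism types of $\mathbf{P}$ (complementary $1$-$3$ pair, complementary $2$-$2$ pair, non-complementary $2$-$2$ pair).

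Finally, the necessity direction is completed by showing that no other configuration yields a minimal Ramsey poset. For each pair $(T_1,T_2)$ of nonempty non-$[4]$ elements outside the stated forms (e.g.\ two singletons, a singleton with a non-complementary triple, two triples, or a singleton with a non-intersecting or non-complementary $2$-set), we construct an explicit avoiding $2$-coloring of $\mathbf{B}_4-\{[4],T_1,T_2\}$, typically taking color $1$ to be a long chain chosen to block the would-be color-$2$ $\mathbf{V}_{2,2}$'s, which shows these posets fail the Ramsey property. Similarly, removing four or more elements from $\mathbf{B}_4-\{[4]\}$ always admits avoiding colorings, ruling out minimal Ramsey posets of size less than $13$. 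Combined with sufficiency, the characterization follows. The main obstacle is the case analysis in sufficiency, particularly the subcase $c(\emptyset)=2$, where one must carefully track how the removal of $\{S_1,S_2,[4]\}$ interacts with the network of incomparable length-$2$ chains above $\emptyset$; the uniqueness portion likewise requires constructing an explicit avoiding coloring for each forbidden pair type, which is tedious but manageable once reduced via the $S_4$-symmetry.
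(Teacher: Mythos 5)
Your architecture is the same as the paper's: reduce to subposets of $\mathbf{B}_4$ omitting $[4]$ and containing $\varnothing$, verify the Ramsey property for the three admissible types by splitting on $c(\varnothing)$, and dispose of every other configuration and of every one-element-deleted subposet by exhibiting an avoiding $2$-coloring. The two preliminary reductions you actually carry out are correct and cleanly stated: $[4]$ lies in no copy of $\mathbf{V}_{1,1}$, so a minimal Ramsey $\mathbf{Q}$ cannot contain $[4]$; and the coloring of $\mathbf{B}_4-\{\varnothing,[4]\}$ giving the singletons color $1$ and all larger sets color $2$ (the same coloring the paper uses for the case $S=\varnothing$) forces $\varnothing\in\mathbf{Q}$.

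The genuine gap is that everything which makes the theorem true is asserted rather than verified. In the sufficiency step the entire case $c(\varnothing)=2$ is compressed into the sentence that the hypotheses on $\{S_1,S_2\}$ ``are exactly what preserves enough incomparable chains\ldots to force a monochromatic copy''; but that is precisely the statement to be proved, and the paper needs a multi-page subcase analysis (Subcases 1.1--1.2, Case 2, Subcases 3.1--3.4) to establish it -- nothing in your text would let a reader reconstruct why, say, $\{[1],\{2\}\}$ fails while $\{[2],\{1,3\}\}$ succeeds. Likewise, both the necessity direction and the minimality verification consist of exhibiting one concrete avoiding coloring for each excluded pair $(T_1,T_2)$ and for each one-element deletion of the three admissible posets (the paper's Figures 3--6); ``typically taking color $1$ to be a long chain'' does not suffice, and indeed for the deletions of a third $2$-set the paper's avoiding colorings take color class $1$ to be a union of three disjoint two-element chains, not a chain. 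There is also a small slip in the size bookkeeping: posets of size $12$ arise from removing three (not four) elements besides $[4]$, and ruling them out is not a free-standing observation -- it requires already knowing which $13$-element posets are Ramsey and that their $12$-element subposets all admit avoiding colorings, i.e., it is the minimality verification itself. As written the proposal is a correct outline of the paper's proof, but the case analyses that constitute its mathematical content are missing.
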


By dropping the restriction of the number of colors in the colorings, we consider the following function of two posets. Here we say a poset $\mathbf{P}$ is {\em rainbow} if any two distinct  elements of $\mathbf{P}$ have different colors. 

\begin{definition}
Given posets $\mathbf{P}$ and $\mathbf{Q}$, we define the Boolean rainbow Ramsey number $RR(\mathbf{P},\mathbf{Q})$ to be the smallest $n$ such that for every coloring of $\mathbf{B}_n$, there exists either a monochromatic $\mathbf{P}$ or a rainbow $\mathbf{Q}$.
\end{definition}

Rainbow Ramsey numbers for graphs have been intensively studied. For a recent survey, see~\cite{FMO}. A result of Johnston, Lu, and Milans~\cite{JLM} shows that  $RR(\mathbf{B}_m,\mathbf{B}_n)$ is finite for all $m,n\in\mathbb{N}$, hence $RR(\mathbf{P},\mathbf{Q})$ exists for all $\mathbf{P}$ and $\mathbf{Q}$.
In~\cite{CCLL}, the authors studied $RR(\mathbf{P},\mathbf{Q})$ for $\mathbf{P}$ and $\mathbf{Q}$ being the combinations of the antichains, the chains, and the Boolean lattices. The relations of the Boolean Ramsey numbers and the Boolean rainbow Ramsey numbers were studied by Chang, Li, Gerbner, Methuku, Nagy, Patkos, and Vizer in~\cite{C7}. Also, they gave an upper bound for $RR(\mathbf{P},\mathbf{A}_k)$ for general poset $\mathbf{P}$ and the antichain $\mathbf{A}_k$ on $k$ elements. 

\begin{theorem}\label{RRPA}{\rm\cite{C7}}
Given an integer $k\ge 2$, let $m_k=\min\{m: \binom{m}{\lfloor m/2\rfloor }\ge k\}$. 
For any poset $\mathbf{P}$ we have 
\[RR(\mathbf{P},\mathbf{A}_k)\le \lfloor(k-1)\lambda^*_{max}(\mathbf{P})\rfloor+m_k.\]
Moreover, if $\mathbf{P}$ is not a chain on one or two elements, then  
we have 
\[
RR(\mathbf{P},\mathbf{A}_3)\le \lfloor2\lambda^*_{max}(\mathbf{P})\rfloor+2.
\] 
\end{theorem}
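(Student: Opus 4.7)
The plan is to fix a coloring $c$ of $\mathbf{B}_n$ with $n = \lfloor(k-1)\lambda^*_{max}(\mathbf{P})\rfloor+m_k$ that avoids any monochromatic $\mathbf{P}$, and to construct a rainbow $\mathbf{A}_k$ from it. Because each color class $\mathcal{F}_i := c^{-1}(i)$ is $\mathbf{P}$-free, the defining property of $\lambda^*_{max}(\mathbf{P})$ supplies the Lubell bound $\lambda_n(\mathcal{F}_i) = \sum_{F \in \mathcal{F}_i}\binom{n}{|F|}^{-1} \le \lambda^*_{max}(\mathbf{P})$ for each color $i$. Summing over colors, $\sum_i \lambda_n(\mathcal{F}_i) = \lambda_n(2^{[n]}) = n+1 > (k-1)\lambda^*_{max}(\mathbf{P})$, so at least $k$ distinct colors must appear somewhere in $\mathbf{B}_n$.

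To upgrade ``$k$ colors somewhere'' to ``$k$ colors on a common antichain,'' I would split $[n] = A \sqcup B$ with $|A| = \lfloor(k-1)\lambda^*_{max}(\mathbf{P})\rfloor$ and $|B| = m_k$. For each $S \subseteq A$ the shifted middle antichain $\mathcal{A}_S := \{S \cup T : T \subseteq B,\ |T| = \lfloor m_k/2\rfloor\}$ is an antichain in $\mathbf{B}_n$ of size at least $\binom{m_k}{\lfloor m_k/2\rfloor} \ge k$. If some $\mathcal{A}_S$ hosts $k$ distinct colors we are done, so I may assume every $\mathcal{A}_S$ uses at most $k-1$ colors. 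I would then combine this ceiling with the Lubell constraint by a double count over pairs $(C,T)$, where $C$ is a maximal chain in the Boolean lattice on $A$ and $T \in \binom{B}{\lfloor m_k/2\rfloor}$: the extended chain $\{S \cup T : S \in C\}$ accumulates $|A|+1$ color-incidences in the Lubell sense, while the per-shift cap of $k-1$ colors on $\mathcal{A}_S$ yields a competing upper bound on the same quantity. The main obstacle is making this double count sharp enough to absorb the floor in $|A|$, which will probably require splitting the averaging by the parity of $m_k$ and tracking how each color class distributes its Lubell mass across the projections to $A$ and to $B$.

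For the refined inequality $RR(\mathbf{P},\mathbf{A}_3) \le \lfloor 2\lambda^*_{max}(\mathbf{P})\rfloor + 2$, the improvement shaves one unit off $m_3 = 3$. When $\mathbf{P}$ is neither a one-element nor a two-element chain, the Lubell slack in the previous argument lets a copy of $\mathbf{B}_2$, whose middle layer already supplies an antichain of size $2$, together with a single carefully chosen incomparable element drawn from $\mathbf{B}(A)$, play the role that was previously assigned to $\mathbf{B}_{m_3}$; the same double count then forces $3$ distinct colors on the resulting antichain. I would verify that the excluded cases $\mathbf{P} = \mathbf{A}_1$ (where every element is trivially a monochromatic $\mathbf{P}$) and $\mathbf{P} = \mathbf{C}_2$ (where a single comparable pair is already a monochromatic $\mathbf{P}$ and no Lubell slack survives) are precisely the ones where this substitution degenerates, so that the refinement fails exactly as stated.
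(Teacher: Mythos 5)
This theorem is quoted from~\cite{C7}; the present paper does not reprove it, so your attempt has to stand on its own, and as written it has a genuine gap. Your first step is fine: each color class $\mathcal{F}_i$ is $\mathbf{P}$-free, so $\sum_{F\in\mathcal{F}_i}\binom{n}{|F|}^{-1}\le\lambda^*_{max}(\mathbf{P})$, and summing over the $r$ colors used gives $n+1\le r\,\lambda^*_{max}(\mathbf{P})$; since $n+1=\lfloor(k-1)\lambda^*_{max}(\mathbf{P})\rfloor+m_k+1>(k-1)\lambda^*_{max}(\mathbf{P})$, at least $k$ colors appear. But ``$k$ colors appear somewhere'' is very far from ``there is a rainbow $\mathbf{A}_k$'': the coloring giving $\varnothing$ color $1$, $[n]$ color $2$, and everything else color $3$ uses three colors yet contains no rainbow $\mathbf{A}_2$. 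The entire burden of the proof therefore sits in your second step, and that step is not carried out --- you yourself flag that making the double count work is an unresolved obstacle.

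Concretely, the double count you sketch does not go through as described. The inequality $\lambda_n(\mathcal{F}_i)\le\lambda^*_{max}(\mathbf{P})$ is a statement about the average of $|C\cap\mathcal{F}_i|$ over \emph{all} maximal chains $C$ of $\mathbf{B}_n$; it says nothing about the restricted chains $\{S\cup T: S\in C\}$ with $T\in\binom{B}{\lfloor m_k/2\rfloor}$ fixed, whose elements carry Lubell weights $1/\binom{n}{|S|+\lfloor m_k/2\rfloor}$ rather than the uniform weights your product structure would need, so the claim that such a chain ``accumulates $|A|+1$ color-incidences in the Lubell sense'' is unjustified. Moreover, the union $\bigcup_S\mathcal{A}_S$ only contains sets $X$ with $|X\cap B|=\lfloor m_k/2\rfloor$, so colors and Lubell mass living outside this slice are invisible to your count; and a layer-by-layer coloring makes every $\mathcal{A}_S$ monochromatic while each color class can still carry Lubell mass close to $\lambda^*_{max}(\mathbf{P})$, so the assumed per-shift cap of $k-1$ colors does not by itself collide with the Lubell bound. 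The $\mathbf{A}_3$ refinement inherits the same problem, and the remark that for $\mathbf{P}=\mathbf{C}_2$ ``no Lubell slack survives'' is not an argument. What is missing is an actual mechanism that uses the $\mathbf{P}$-freeness of the color classes a second time to place $k$ distinct colors on a common antichain (for example, by controlling the smallest set-size at which each successive new color can first appear and then working inside an interval of height $m_k$); without such a mechanism the proof is incomplete.
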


The value 
\[
\lambda^*_{max}(\mathbf{P})=\sup\left\{
\sum_{F\in\mathcal{F}}\binom{n}{|F|}^{-1}\mid\mathcal{F}\mbox{ is a }\mathbf{P}\mbox{-free family of subsets of }[n].
\right\}
\]
is an important parameter of a poset $\mathbf{P}$ in forbidden subposet theory. Meroueh~\cite{M} proved that it is finite for every poset $\mathbf{P}$ but the evaluation of $\lambda^*_{max}(\mathbf{P})$ is difficult in practical. For more background on forbidden subposet theory, we refer the readers to~\cite{GP,GL}.

We have the result on the Boolean rainbow Ramsey number below.

\begin{theorem}\label{RRJA}
For $k\ge 2$ and $1 \le m < n$, we have
\[
RR(\mathbf{V}_{m,n},\mathbf{A}_k) = n(k-1)+2. 
\]
\end{theorem}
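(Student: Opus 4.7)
My plan is to establish the two bounds separately.

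For the lower bound $RR(\mathbf{V}_{m,n},\mathbf{A}_k)\ge n(k-1)+2$, I will exhibit a $k$-coloring of $\mathbf{B}_{n(k-1)+1}$ that contains neither a monochromatic $\mathbf{V}_{m,n}$ nor a rainbow $\mathbf{A}_k$. Let $N=n(k-1)+1$. Assign color~$1$ only to the two universally-comparable elements $\emptyset$ and $[N]$, and partition the middle levels $\{1,2,\ldots,n(k-1)\}$ into $k-1$ consecutive blocks of $n$ levels each, assigning colors $2,3,\ldots,k$ to the blocks respectively. Each color $i\ge 2$ then spans only $n$ consecutive levels, so any chain within such a class has at most $n$ elements and cannot host the $(n+1)$-chain that a copy of $\mathbf{V}_{m,n}$ requires. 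Color~$1$ is merely the 2-chain $\emptyset\subsetneq[N]$, trivially too small. Since $\emptyset$ and $[N]$ are comparable to every other element, no antichain of size $\ge 2$ can contain them, so any rainbow $\mathbf{A}_k$ would need $k$ distinct colors among the $k-1$ colors appearing on nontrivial antichains --- impossible.

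For the upper bound $RR(\mathbf{V}_{m,n},\mathbf{A}_k)\le n(k-1)+2$, let $N=n(k-1)+2$ and let $c$ be any coloring of $\mathbf{B}_N$ with no rainbow $\mathbf{A}_k$. If $c$ uses at most $k-1$ distinct colors, Theorem~\ref{IT} immediately yields a monochromatic $\mathbf{V}_{m,n}$, since $R_{k-1}(\mathbf{V}_{m,n})=n(k-1)+1<N$. Otherwise $c$ uses at least $k$ colors, and the plan is to leverage the no-rainbow-$\mathbf{A}_k$ constraint to structurally restrict where ``extra'' colors (beyond $k-1$) may sit. The guiding observation is that any element carrying an extra color which participates in some antichain of size $k$ with witnesses of $k-1$ other colors would produce a rainbow $\mathbf{A}_k$; hence every element of an extra color must be comparable to at least one witness of each of the main colors in any prospective $k$-antichain. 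By identifying extra-color classes and either merging them with a compatible existing color (without creating a new monochromatic $\mathbf{V}_{m,n}$) or deleting them to expose a sub-lattice containing $\mathbf{B}_{n(k-1)+1}$ on which the restricted coloring uses at most $k-1$ colors, I would reduce to the easy case and invoke Theorem~\ref{IT} once more.

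The main obstacle is the precise structural characterization in the second case: making rigorous the assertion that extra colors concentrate on elements comparable to all witnesses of other colors (such elements need not be $\emptyset$ or $[N]$), and verifying that the merging/deletion step does not create collateral monochromatic $\mathbf{V}_{m,n}$ copies or destroy the no-rainbow condition. Overcoming this will require combining the $\mathbf{V}_{m,n}$-freeness of each color class (forced by the contradiction hypothesis) with the extra one-level slack in $N=R_{k-1}(\mathbf{V}_{m,n})+1$, which is exactly what separates the upper-bound situation from the saturated lower-bound construction.
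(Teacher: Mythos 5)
Your lower bound is correct and is essentially the paper's own construction: the paper colors $X$ with $\lceil |X|/n\rceil$ for nontrivial $X$ and gives $\varnothing$ and $[n(k-1)+1]$ a reserved $k$-th color, which is your coloring up to relabeling, with the same two observations (each non-reserved class spans $n$ levels so cannot contain the $(n+1)$-chain inside $\mathbf{V}_{m,n}$; the reserved color lies only on elements comparable to everything, so no antichain can be rainbow).

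The upper bound, however, has a genuine gap. The case where $c$ uses at most $k-1$ colors is fine via Theorem~\ref{IT}, but that is the trivial case; the entire difficulty is the case of $k$ or more colors, and there you offer only a plan whose central step you yourself flag as unresolved. Concretely: (i) merging an ``extra'' color class into an existing one can create a monochromatic $\mathbf{V}_{m,n}$ that was not present before, and you give no mechanism to certify a ``compatible'' merge exists; (ii) deleting the elements of an extra color does not in general leave a copy of $\mathbf{B}_{n(k-1)+1}$ --- the removal lemmas available (Lemmas~\ref{chain} and~\ref{antichain}) apply only to chains and antichains, and a color class avoiding $\mathbf{V}_{m,n}$ need be neither, nor even a bounded union of them; (iii) the observation that an extra-colored element must be comparable to some witness of each other color in ``any prospective $k$-antichain'' is not a structural restriction on where that element sits, since the witnesses vary with the prospective antichain. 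The paper avoids all of this by inducting on $k$ rather than on the number of colors: assuming no monochromatic $\mathbf{V}_{m,n}$, it first proves (using Theorem~\ref{IT} inside the filter $\{X : i\in X\}\cong\mathbf{B}_{n(k-1)+1}$) that some sublattice $\mathbf{B}_{n(k-1)+2}^{i,\ell}\cong\mathbf{B}_{n(k-1)}$ carries at least $k$ colors; since every element of $\mathbf{B}^{\ell,i}$ is incomparable to every element of $\mathbf{B}^{i,\ell}$, and $\mathbf{B}^{\ell,i}$ contains $\mathbf{B}_{n(k-2)+2}$, the inductive hypothesis produces a rainbow $\mathbf{A}_{k-1}$ there, which is then extended by one element of $\mathbf{B}^{i,\ell}$ carrying a missing color. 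The base case $k=2$ needs its own argument (Proposition~\ref{RRPA2}), which your sketch also does not supply. To complete your proof you would need either to carry out this induction or to supply a genuinely different resolution of the $\ge k$-colors case; as written, the claim is not established.
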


The following are the organizations of the remaining sections of this paper. We first  determine the Boolean Ramsey numbers for $V$-shaped posets as stated in Theorem~\ref{IT} and Theorem~\ref{MT} in next section. 
In Section 3, we show the minimal poset properties as stated in Theorem~\ref{V1k} and Theorem~\ref{V2V1}. 
The proof of Theorem~\ref{RRJA} will be presented in Section 4 together with an improvement of Theorem~\ref{RRPA} (for $k=2$). We put some comments and open problems in the last section. 

\section{The Boolean Ramsey Numbers}

To prove Theorem~\ref{IT}, we introduce the following two lemmas that will be used in the proof. In fact, they will play important roles in many proofs.

\begin{lemma}\label{chain}
Let $\mathbf{C}$ be a chain in the Boolean lattice $\mathbf{B}_n$ such that at least one of  $\varnothing$ and $[n]$ is not an element in $\mathbf{C}$.
Then $\mathbf{B}_n - \mathbf{C}$ contains  $\mathbf{B}_{n-1}$ as a subposet.
\end{lemma}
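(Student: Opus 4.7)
The plan is to exploit the simple fact that a chain $\mathbf{C}$ in $\mathbf{B}_n$ has a unique bottom element $B$ and unique top element $T$, and every $S\in\mathbf{C}$ satisfies $B\subseteq S\subseteq T$. Under the hypothesis, at least one of $T\ne[n]$ or $B\ne\varnothing$ must hold, and I would split on that dichotomy. In either case I intend to produce a canonical copy of $\mathbf{B}_{n-1}$ inside $\mathbf{B}_n$ that is completely disjoint from $\mathbf{C}$, rather than searching for an arbitrary subposet.

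More specifically, recall the two obvious copies of $\mathbf{B}_{n-1}$ sitting inside $\mathbf{B}_n$: for each index $i\in[n]$, the ``upper'' family $\mathcal{U}_i=\{S\subseteq[n]:i\in S\}$ and the ``lower'' family $\mathcal{L}_i=\{S\subseteq[n]:i\notin S\}$ are each isomorphic to $\mathbf{B}_{n-1}$ (via $S\mapsto S\setminus\{i\}$ and $S\mapsto S$, respectively). I would choose the index $i$ so that $\mathbf{C}$ avoids the corresponding family. If $[n]\notin\mathbf{C}$, then $T\subsetneq[n]$ and I can pick any $i\in[n]\setminus T$; since $S\subseteq T$ for every $S\in\mathbf{C}$, no element of $\mathbf{C}$ contains $i$, so $\mathcal{U}_i\subseteq\mathbf{B}_n-\mathbf{C}$. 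If instead $\varnothing\notin\mathbf{C}$, then $B\ne\varnothing$ and I pick any $i\in B$; since $B\subseteq S$ for every $S\in\mathbf{C}$, every element of $\mathbf{C}$ contains $i$, so $\mathcal{L}_i\subseteq\mathbf{B}_n-\mathbf{C}$. In both cases the chosen family is a subposet of $\mathbf{B}_n-\mathbf{C}$ isomorphic to $\mathbf{B}_{n-1}$.

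There isn't really a main obstacle here; the argument is almost immediate once one notices that the totally-ordered structure of $\mathbf{C}$ reduces the containment conditions on all of $\mathbf{C}$ to conditions on the single endpoint $T$ (or $B$). The only thing to be mildly careful about is to verify the two symmetric cases cleanly and to ensure the chosen $\mathcal{U}_i$ or $\mathcal{L}_i$ retains the induced partial order from $\mathbf{B}_n$, which is immediate because both families are intervals in $\mathbf{B}_n$ (namely $[\{i\},[n]]$ and $[\varnothing,[n]\setminus\{i\}]$).
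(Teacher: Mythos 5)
Your proof is correct and is essentially the paper's own argument: the paper also splits on whether $\varnothing\notin\mathbf{C}$ (taking $n$ in the smallest element so that all of $\mathbf{C}$ contains $n$ and the subsets of $[n-1]$ survive, your $\mathcal{L}_i$) or $[n]\notin\mathbf{C}$ (taking $n$ outside every element of $\mathbf{C}$ and using $X\mapsto X\cup\{n\}$, your $\mathcal{U}_i$). The only cosmetic difference is that you choose the index $i$ explicitly while the paper relabels so that $i=n$.
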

	
\begin{proof}
Suppose that $X$ is the smallest element in $\mathbf{C}$ and $X \neq \varnothing$.
Without loss of generality, we may assume $n\in X$. Thus $n \in Y$ for all $Y \in \mathbf{C}$.
So every subset of $[n-1]$ is an element in  $\mathbf{B}_n-\mathbf{C}$.
This implies $\mathbf{B}_n - \mathbf{C}$ contains $\mathbf{B}_{n-1}$ as a subposet.

If $\varnothing\in\mathbf{C}$ then $[n]\not\in\mathbf{C}$,
and we may assume $n\not\in Y$ for all $Y\in \mathbf{C}$.
Define an order-preserving injective mapping $\phi$ from $\mathbf{B}_{n-1}$ to $\mathbf{B}_n-\mathbf{C}$ by $\phi(X)=X\cup \{n\}$ for every $X\in\mathbf{B}_{n-1}$. 
This shows that $\mathbf{B}_n - \mathbf{C}$ contains a copy of $\mathbf{B}_{n-1}$.
\end{proof}

One can remove an antichain instead of a chain in the Boolean lattice and still obtain the same result. This property was pointed out and named {\em the antichain lemma} in~\cite{AW}. 
To make the paper self-contained, we give the proof of the lemma. 

\begin{lemma}{\rm \cite{AW}}\label{antichain}
Let $\mathbf{A}$ be an antichain in the Boolean lattice $\mathbf{B}_n$.
Then $\mathbf{B}_n- \mathbf{A}$ contains  $\mathbf{B}_{n-1}$ as a subposet.
\end{lemma}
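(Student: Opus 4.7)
The plan is to build an explicit order-preserving injection $\phi:\mathbf{B}_{n-1}\to\mathbf{B}_n-\mathbf{A}$ by sending each $X\subseteq[n-1]$ to either $X$ itself or $X\cup\{n\}$. Since $\{X,X\cup\{n\}\}$ is a chain of length two, the antichain $\mathbf{A}$ meets it in at most one element, so at each $X$ at least one of the two candidates is admissible. The real task is to make the individual choices cohere into an order-preserving map.

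The first observation is that if $X\subsetneq Y$ in $\mathbf{B}_{n-1}$ and $\phi(X)=X\cup\{n\}$, then one is forced to set $\phi(Y)=Y\cup\{n\}$, since $n\notin Y$ rules out $X\cup\{n\}\subseteq Y$. Thus the set $\mathcal{U}:=\{X\subseteq[n-1]:\phi(X)=X\cup\{n\}\}$ must be an up-set in $\mathbf{B}_{n-1}$; conversely, any such up-set yields an order-preserving injection (injectivity being automatic, since $\phi(X)\cap[n-1]=X$). So the problem reduces to choosing an up-set $\mathcal{U}\subseteq\mathbf{B}_{n-1}$ satisfying the two side conditions: $X\notin\mathbf{A}$ for every $X\notin\mathcal{U}$, and $X\cup\{n\}\notin\mathbf{A}$ for every $X\in\mathcal{U}$.

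Writing $\mathcal{A}_0=\{X\in\mathbf{A}:n\notin X\}$ and $\mathcal{A}_1=\{X\setminus\{n\}:X\in\mathbf{A},\,n\in X\}$, these two conditions become $\mathcal{A}_0\subseteq\mathcal{U}$ and $\mathcal{U}\cap\mathcal{A}_1=\varnothing$. The natural choice is to let $\mathcal{U}$ be the up-set generated by $\mathcal{A}_0$ in $\mathbf{B}_{n-1}$, which trivially meets the first condition. The only real step, and the place where the hypothesis enters, is verifying the second: if some $Y\in\mathcal{A}_1\cap\mathcal{U}$ existed, it would dominate some $X\in\mathcal{A}_0$, yielding $X\subseteq Y\subsetneq Y\cup\{n\}$ with both $X$ and $Y\cup\{n\}$ in $\mathbf{A}$, contradicting the antichain property. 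The whole argument hinges on this one clean incompatibility check between the ``lift upward'' elements forced by $\mathcal{A}_0$ and the ``stay put'' elements forbidden by $\mathcal{A}_1$; everything else is book-keeping.
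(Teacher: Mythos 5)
Your construction is correct and is essentially identical to the paper's proof: the up-set $\mathcal{U}$ generated by $\mathcal{A}_0$ is precisely the set of $X\subseteq[n-1]$ that contain some member of $\mathbf{A}$, which is exactly the paper's criterion for mapping $X$ to $X\cup\{n\}$ rather than to $X$. Your incompatibility check that $\mathcal{U}\cap\mathcal{A}_1=\varnothing$ is the same use of the antichain hypothesis (stated a bit more explicitly than in the paper), and the remaining verifications of injectivity and order preservation/reflection follow from $\phi(X)\cap[n-1]=X$ just as you note.
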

	
\begin{proof}
Let $\mathbf{A}$ be an antichain in $\mathbf{B}_n$.
Define $\phi$ from $\mathbf{B}_{n-1}$ to $\mathbf{B}_n - \mathbf{A}$ by
\[
\phi(X) = \left \{
\begin{array}{ll}
X, & \mbox{ if }X\mbox{ does not contain any }Y\in\mathbf{A}\mbox{ as a subset,}\\
X \cup \{n\}, & \mbox{ if }X\mbox{ contains some }Y\in\mathbf{A}\mbox{ as a subset.}
\end{array}
\right.	
\]
Then we show that $\phi$ is injective and order-preserving.

For two distinct elements $X_1, X_2\in \mathbf{B}_{n-1}$,
if both of them contain some elements in $\mathbf{A}$ as the subsets or
neither of them contain an element in $\mathbf{A}$, 
then it is clear that $\phi(X_1)\neq \phi(X_2)$.
Suppose one of them, say $X_1$, contains an element in $\mathbf{A}$, and the other does not.
Then $\phi(X_1)=X_1\cup\{n\}\neq X_2=\phi(X_2)$.
So $\phi$ is injective.

Next we show that $\phi$ is order-preserving.
Suppose $X_1\subseteq X_2$ for some $X_1,X_2\in\mathbf{B}_{n-1}$.
If $Y\subseteq X_1$ for some $Y\in\mathbf{A}$, then $Y\subseteq X_2$.
Hence $\phi(X_1)=(X_1\cup\{n\})\subseteq (X_2\cup\{n\})=\phi(X_2)$.
If $Y\not\subseteq X_1$ for any $Y\in\mathbf{A}$,
then $\phi(X_1)=X_1\subseteq X_2\subseteq \phi(X_2)$.
Assume both $ X_1\not\subseteq X_2 $ and $ X_2\not\subseteq X_1 $.
Then clearly, $X_i\not \subseteq (X_j\cup\{n\} )$ and $(X_i\cup\{n\})\not \subseteq (X_j\cup\{n\} )$ for $\{i,j\}=\{1,2\}$. Therefore,
$ \phi(X_1)\not\subseteq \phi(X_2) $ and $\phi(X_2)\not\subseteq \phi(X_1) $ hold.
As a conclusion, $\phi$ is an order-preserving mapping and $\mathbf{B}_n-\mathbf{A}$ contains $\mathbf{B}_{n-1}$ as a subposet.
\end{proof}

Let us introduce a notation that will appear in many proofs. Given $\mathbf{B}_n$, we define 
\[\mathbf{B}_{n}^{i,j} : = \{ X \subseteq [n]\mid  i \in X,\, j \not\in X \}.\]
Note that $\mathbf{B}_{n}^{i,j}$ is isomorphic to $\mathbf{B}_{n-2}$.
\bigskip

\noindent{\em Proof of Theorem~\ref{IT}.}
The lower bound can be obtained by using the same coloring method for both conditions (1) and (2).
For $X\in \mathbf{B}_{nk}$, 
color $X$ with $\lfloor\frac{|X|}{n} \rfloor+1$ for $X\neq [nk]$ and color $[nk]$ with any $i\in [k]$. 
Note that the poset $\mathbf{V}_{m,n}$ contains a chain on $n+1$ elements.    
In this coloring, a color class either consists of subsets of $n$ different sizes, or $[nk]$ belongs to the color class and it consists of subsets of $n+1$ different sizes.  
However, $[nk]$ cannot be any element in $\mathbf{V}_{m,n}$ since it is greater than any other elements in $\mathbf{B}_{nk}$. Thus the lower bound holds.

For the upper bounds, we first show  $R_k(\mathbf{V}_{1,1})\le k+1$ by induction on $k$.
For $k=1$, it is obvious that a single color $\mathbf{B}_2$ contains a monochromatic $\mathbf{V}_{1,1}$ whose elements are  $\varnothing$, $\{1\}$ and $\{2\}$.
Suppose the upper bound holds for some integer $k-1\ge 0$.
Consider any $k$-coloring $c$ of $\mathbf{B}_{k+1}$.
Without loss of generality, we assume $c(\varnothing)=1$.
If there are two incomparable subsets $X$ and $Y$ colored with $1$, then we have a  monochromatic $\mathbf{V}_{1,1}$ that consists of $\varnothing$, $X$ and $Y$.
Thus, we can assume all subsets of color 1 form a chain.
Let $\mathbf{C}$ be the chain formed by all subsets of color 1 except $[k+1]$ if $c([k+1])=1$.
By Lemma~\ref{chain},  $\mathbf{B}_{k+1}-\mathbf{C}$ contains a copy of $\mathbf{B}_{k}$.
Pick such a copy of $\mathbf{B}_{k}$
from $\mathbf{B}_{k+1}-\mathbf{C}$.
Either there are only $k-1$ colors on the elements in it, or we have $c([k+1])=1$ and there are $k$ colors on the elements in it.
The latter case also implies that $[k+1]$ is the maximal element in the copy of $\mathbf{B}_{k}$ we picked.
If $c([k+1])=1$, we replace it with any other color in $\{2,\ldots, k\}$ to obtain a coloring of $\mathbf{B}_k$ with only $k-1$ colors.
Now by the inductive hypothesis, there exists a monochromatic $\mathbf{V}_{1,1}$ in $\mathbf{B}_k$.
Since $[k+1]$ cannot be any element in the monochromatic $\mathbf{V}_{1,1}$ we just got, the $\mathbf{V}_{1,1}$ actually has appeared in $\mathbf{B}_{k+1}$ under the original coloring $c$.
Therefore the upper bound holds for $m=n=1$ and  all $k\ge 1$.

Next we show $R_k (\mathbf{V}_{m,n}) \le nk+1$. We use induction on $k$ again.
For $k = 1$, there is only one color on $\mathbf{B}_{n+1}$.
Note that the subsets $[1],[2],\ldots,[n]$ form a chain on $n$ elements, and their complementary sets $[n]^c,[n-1]^c,\ldots,[1]^c$ also form a chain on $n$ elements. 
Since the two chains are incomparable, 
all the $2n$ sets with $\varnothing$ form the poset $\mathbf{V}_{n,n}$ which contains $\mathbf{V}_{m,n}$ as a subposet.
Suppose the upper bound holds for some integer $k-1 \ge 1$, that is $R_{k-1} (\mathbf{V}_{m,n}) \le n(k-1)+1$.
Let $c'$ be any $k$-coloring of $\mathbf{B}_{nk+1}$, and let $\mathcal{F}_\varnothing$ be the family of subsets colored with $c'(\varnothing)$.
Now we consider two cases:
$\mathcal{F}_\varnothing - \{\varnothing,[nk+1]\}$ contains a chain on $n$ elements, or
any chain in $\mathcal{F}_\varnothing- \{\varnothing,[nk+1]\}$ contains at most $n-1$ elements.
Recall that $\mathbf{B}_{nk+1}^{i,j}$ is isomorphic to $\mathbf{B}_{nk-1}$, 
and observe that every $X\in \mathbf{B}_{nk+1}^{i,j}$
and every $Y\in \mathbf{B}_{nk+1}^{j,i}$ are incomparable.

Suppose that $\mathcal{F}_\varnothing-\{\varnothing,[nk+1]\}$ contains a chain $\mathbf{C}$ on $n$ elements. Note that the monochromatic $\mathbf{C}$ in $\mathcal{F}_\varnothing- \{\varnothing,[nk+1]\}$ is contained in some $\mathbf{B}_{nk+1}^{i,j}$ where $i$ is an element in the smallest set in $\mathbf{C}$ and $j$ is an element not in the largest set in $\mathbf{C}$.
If $\mathbf{B}_{nk+1}^{j,i}\cap \mathcal{F}_\varnothing$ contains a chain $\mathbf{C}'$ on $m$ elements,
then $\varnothing$ together with elements in  $\mathbf{C}$ and $\mathbf{C}'$ form a monochromatic $\mathbf{V}_{m,n}$ of color 1, and we are done.
Otherwise, any chain in $\mathbf{B}_{nk+1}^{j,i}\cap \mathcal{F}_\varnothing$ contains at most $m-1$ elements.
With a result of Mirsty~\cite{MIR} on the poset decomposition,  we can partition the family $\mathbf{B}_{nk+1}^{j,i}\cap \mathcal{F}_\varnothing$ into at most $m-1$ antichains.
By removing these antichains from $\mathbf{B}_{nk+1}^{j,i}$ and applying Lemma \ref{antichain} at most $m-1$ times,
we see that $\mathbf{B}_{nk+1}^{j,i}-\mathcal{F}_\varnothing$ contains $\mathbf{B}_{nk-1-(m-1)}$ as a subposet. 
Since there are only $k-1$ colors on the elements in $\mathbf{B}_{nk+1}^{j,i}- \mathcal{F}_\varnothing$ and $nk-1-(m-1) \ge n(k-1)+1$, there exists a monochromatic $\mathbf{V}_{m,n}$ in $\mathbf{B}_{nk+1}^{j,i}$ by inductive hypothesis.

Now if any chain in $\mathcal{F}_\varnothing- \{\varnothing,[nk+1]\}$ contains at most $n-1$ subsets, then we can partition $\mathcal{F}_\varnothing- \{[nk+1]\}$ into at most $n$ antichains.
By removing these antichains and applying Lemma~\ref{antichain} at most $n$ times, we conclude that $\mathbf{B}_{nk+1} - (\mathcal{F}_\varnothing -\{[nk+1]\})$ contains a copy of $\mathbf{B}_{n(k-1)+1}$.
Once $c'([nk+1])=c'(\varnothing)$ and the previous $\mathbf{B}_{n(k-1)+1}$ has $[nk+1]$ as its element, we replace the color of $[nk+1]$ with any color other than $c'(\varnothing)$.
Thus, there are only $k-1$ colors on the elements in $\mathbf{B}_{nk+1} - (\mathcal{F}_\varnothing- \{[nk+1]\})$, 
and at most $k-1$ colors on the elements in the copy of $\mathbf{B}_{n(k-1)+1}$. By inductive hypothesis, there exists a monochromatic $\mathbf{V}_{m,n}$ in the copy of $\mathbf{B}_{n(k-1)+1}$ contained in $\mathbf{B}_{nk+1} - (\mathcal{F}_\varnothing- \{[nk+1]\})$.
As before, this monochromatic $\mathbf{V}_{m,n}$ does not contain $[nk+1]$ as its element. So it already exists in the original coloring $c'$ of $\mathbf{B}_{nk+1}$.
\qed

The content of next lemma is the core of the proof of Theorem~\ref{MT}. Nevertheless, we think it can be applied to the investigation of other Boolean Ramsey numbers, so we write it as an independent lemma. 

\begin{lemma}\label{VMN} 
Let $c$ be a 2-coloring of $\mathbf{B}_{m+n+1}$. If there exists $c(\{i\}^c)=c(\varnothing)$ for some $i\in[m+n+1]$, 
then there exists either a monochromatic $\mathbf{V}_{m,m}$ of color 1 or a monochromatic $\mathbf{V}_{n,n}$ of color 2.
\end{lemma}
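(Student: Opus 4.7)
The plan is to argue by induction on $m$, with base case $m=1$ (for all $n\ge 1$) handled directly and the inductive step for $m\ge 2$ invoking the lemma for the parameters $(m-1,n)$.

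For the base case $m=1$, after relabeling take $i=n+2$, so that $c(\varnothing)=c([n+1])=1$ in $\mathbf{B}_{n+2}$. If some set $X$ with $n+2\in X$ and $X\ne[n+2]$ has color $1$, then $\varnothing,X,[n+1]$ form $\mathbf{V}_{1,1}$ of color $1$ (since $X$ and $[n+1]$ are incomparable). Otherwise every such $X$ has color $2$; the family $\{X:n+2\in X,\ X\ne[n+2]\}$ is isomorphic to $\mathbf{B}_{n+1}$ with its top removed, which readily contains $\mathbf{V}_{n,n}$ as a subposet, yielding $\mathbf{V}_{n,n}$ of color $2$.

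For the inductive step with $m\ge 2$, take $i=m+n+1$ so that $c(\varnothing)=c([m+n])=1$, and set $I:=[\varnothing,[m+n]]\cong\mathbf{B}_{m+n}$. I would first search for a coatom $[m+n]\setminus\{j\}$ of $I$ that is colored $1$. If one exists, the coloring restricted to $I$ satisfies the lemma's hypothesis at parameters $(m-1,n)$, so the inductive hypothesis produces inside $I$ either a monochromatic $\mathbf{V}_{n,n}$ of color $2$ (and we are done) or a monochromatic $\mathbf{V}_{m-1,m-1}$ of color $1$ with root $R$ and chains $X_1\subset\cdots\subset X_{m-1}$, $Y_1\subset\cdots\subset Y_{m-1}$. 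I would then try to extend by setting $X^*:=X_{m-1}\cup\{m+n+1\}$ and $Y^*:=Y_{m-1}\cup\{m+n+1\}$; since $X^*,Y^*$ are the only sets containing $m+n+1$, they are incomparable to every element of the opposite chain, and they are incomparable to each other because $X_{m-1},Y_{m-1}$ are. Thus, if both $X^*$ and $Y^*$ are color $1$, we obtain $\mathbf{V}_{m,m}$ of color $1$ as desired.

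The main obstacle is the residual case: either no coatom of $I$ is color $1$, or the extension above fails because some $X^*$ or $Y^*$ is color $2$. In the first subcase, all $m+n$ coatoms of $I$ are color $2$; combined with the antichain lemma (Lemma~\ref{antichain}) applied to strip off the short color-$1$ chains in $I$ (whose length is constrained precisely because the inductive extension route is blocked), one embeds a color-$2$ copy of $\mathbf{B}_n$ inside $I$ and then lifts it by adjoining $m+n+1$ to a color-$2$ copy of $\mathbf{B}_{n+1}$ in $\mathbf{B}_{m+n+1}$, which contains $\mathbf{V}_{n,n}$. The second subcase is handled analogously: the failed extension reveals a color-$2$ set sitting immediately above a color-$1$ chain end, which serves as the seed for a color-$2$ V-structure completed using the color-$2$ sets forced by the remaining structural constraints. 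Carefully orchestrating these two subcases—picking the coatom index $j$ consistently and verifying that the adjoined layer $J:=\{X:m+n+1\in X\}$ is color $2$ wherever needed—will be the technical heart of the proof.
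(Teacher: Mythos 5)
Your base case (for $c(\varnothing)=1$) and the ``happy path'' of your inductive step are fine, but two things are missing. The smaller one: relabeling the ground set cannot arrange $c(\varnothing)=1$. Since the conclusion ties color~1 to $\mathbf{V}_{m,m}$ and color~2 to $\mathbf{V}_{n,n}$, the case $c(\varnothing)=2$ is a genuinely different statement; it can be recovered from the $(n,m)$ instance with the colors swapped, but an induction on $m$ alone does not automatically supply that instance, so you must either induct on $m+n$ (as the paper does) or state and prove the color-swapped symmetry explicitly. The serious gap is in the residual cases, which you defer to ``the technical heart of the proof'' with a strategy that does not work. In the subcase where no coatom of $I\cong\mathbf{B}_{m+n}$ has color~1, you want to strip the color-1 sets from $I$ via Lemma~\ref{antichain} and land in a color-2 copy of $\mathbf{B}_n$; by Mirsky's chain-decomposition theorem this requires every color-1 chain in $I$ to have at most $m$ elements. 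You claim this bound is forced ``because the inductive extension route is blocked,'' but blocking the extension only says the coatoms are color~2 and says nothing about interior chains. A long monochromatic chain is not a $\mathbf{V}_{m,m}$: nothing prevents the color-1 sets of $I$ from being, say, $\varnothing\subset\{1\}\subset\cdots\subset[m+n-2]$ together with $[m+n]$, a chain on $m+n$ elements avoiding the coatom level, for which the antichain-stripping leaves essentially nothing. (The lemma still holds for such a coloring, but your argument does not establish it.) The second residual subcase is in worse shape: a single color-2 set $X^*$ does not provide the color-2 root together with two incomparable color-2 chains of length $n$ that $\mathbf{V}_{n,n}$ requires, and no mechanism is given for extracting them.

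The idea you are missing is the one the paper's proof is built on: work with the two complementary quadrants $\mathbf{B}_{m+n+1}^{i,j}$ and $\mathbf{B}_{m+n+1}^{j,i}$, every element of one being incomparable to every element of the other. If \emph{both} quadrants contain a color-1 chain on $m$ elements, those two chains together with $\varnothing$ already form the $\mathbf{V}_{m,m}$ of color~1; otherwise one quadrant has all of its color-1 chains of length at most $m-1$ (or $m-2$ after accounting for a coatom $\{j\}^c$ of known color), and \emph{there} the Mirsky/antichain-lemma stripping is legitimately available, producing a color-2 copy of $\mathbf{B}_{n+1}$ up to its maximal element and hence the $\mathbf{V}_{n,n}$ of color~2 (with a separate concatenation using two coatoms $\{w\}^c,\{z\}^c$ when only a $\mathbf{B}_n$ is available). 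It is this dichotomy between the two quadrants---not the blocking of a coatom extension---that forces the chain-length bound your stripping step needs, and without it the induction on $m$ does not close.
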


\begin{proof}
The statement holds for $m=n=1$ since for any 2-coloring of $\mathbf{B}_3$, there exists a monochromatic $\mathbf{V}_{1,1}$ by Theorem~\ref{IT}. 
Suppose $m+n\ge 3$. 
Given a 2-coloring $c$ of $\mathbf{B}_{m+n+1}$, 
let $\mathcal{F}_\varnothing$ be the family of  subsets of color $c(\varnothing)$.
 
\noindent{\em Case 1.} There exist $\{i\}^c,\{j\}^c\in \mathcal{F}_\varnothing$ with $i\neq j$. 

First consider $m=1$ ($n=1$ is similar). 
If $c(\{i\}^c)=c(\{j\}^c)=c(\varnothing)=1$, then there is nothing to prove. So we consider  $c(\{i\}^c)=c(\{j\}^c)=c(\varnothing)=2$. 
For $\mathbf{B}_{n+2}^{i,j}\cap\mathcal{F}_\varnothing$ and  $\mathbf{B}_{n+2}^{j,i}\cap\mathcal{F}_\varnothing$, if both of them 
contain a chain on $n$ elements, then we pick such a chain from each of them. 
The two chains together with $\varnothing$ form a $\mathbf{V}_{n,n}$ of color 2.
So, we may assume that every chain in  $\mathbf{B}_{n+2}^{i,j}\cap \mathcal{F}_\varnothing$  contains at most $n-1$ elements. 
Moreover, since $c(\{j\}^c)=2$, every chain in 
$\mathbf{B}_{n+2}^{i,j}\cap(\mathcal{F}_\varnothing-\{\{j\}^c\})$ contains at most $n-2$ elements.
By Lemma~\ref{antichain}, 
$\mathbf{B}_{n+2}^{i,j}-(\mathcal{F}_\varnothing- \{\{j\}^c\})$ contains a copy of  $\mathbf{B}_{2}$ and all the elements but not necessary the maximal element (it could be $\{j\}^c$) in this $\mathbf{B}_{2}$ are of color 1. Thus, we have a $\mathbf{V}_{1,1}$ of color 1.

The proof of the condition both $m\ge 2$ and $n\ge 2$ is similar. Suppose that $c(\{i\}^c)=c(\{j\}^c)=c(\varnothing)=1$. 
As before, we may assume that $\mathbf{B}_{m+n+1}^{i,j}\cap \mathcal{F}_\varnothing$ does not contain a chain on $m$ elements, and every chain in 
$\mathbf{B}_{m+n+1}^{i,j}\cap(\mathcal{F}_\varnothing-\{\{j\}^c\})$ contains at most $m-2$ elements.
By Lemma~\ref{antichain}, 
$\mathbf{B}_{m+n+1}^{i,j}-(\mathcal{F}_\varnothing- \{\{j\}^c\})$ contains a copy of  $\mathbf{B}_{n+1}$ and all elements but not necessary the maximal element in this $\mathbf{B}_{n+1}$ are of color 2. Thus, we have a $\mathbf{V}_{n,n}$ of color 2.
If $c(\varnothing)=2$, then we can exchange $m$ and $n$ in the above argument. 

\noindent{\em Case 2.} There exists exactly one $\{i\}^c\in\mathcal{F}_\varnothing$.

Suppose $c(\{i\}^c)=c(\varnothing)=1$.
Again, for $\mathbf{B}_{m+n+1}^{i,j}\cap\mathcal{F}_\varnothing$ and  $\mathbf{B}_{m+n+1}^{j,i}\cap\mathcal{F}_\varnothing$, if there is no $\mathbf{V}_{m,m}$ of color 1, then 
at least one of them cannot contain a chain on $m$ elements. If it is $\mathbf{B}_{m+n+1}^{j,i}\cap\mathcal{F}_\varnothing$, then the rest of the proof is the same as the previous case. So we assume that every chain in  
$\mathbf{B}_{m+n+1}^{i,j}\cap\mathcal{F}_\varnothing$ contains at most $m-1$ elements. Note that $c(\{j\}^c)=2$. 
So we can only conclude that $\mathbf{B}_{m+n+1}^{i,j}- (\mathcal{F}_\varnothing-\{\{j\}^c\})$ contains a $\mathbf{B}_{n}$ of color 2. 
If $n=1$, then we can pick the minimal element of this $\mathbf{B}_1$ as $Y$, and pick $y\in [m+n+1]-(Y\cup\{j\})$.
Then $Y$, $\{j\}^c$, and $\{y\}^c$ form a  $\mathbf{V}_{1,1}$ of color 2. 
For $n\ge2$, since $\mathbf{B}_{n}$ contains $\mathbf{V}_{n-1,n-1}$, let $Y$, $W_1,W_2,\ldots,W_{n-1}$ and $Z_1,Z_2\ldots,Z_{n-1}$ be the sets forming $\mathbf{V}_{n-1,n-1}$ in $\mathbf{B}_{m+n+1}^{i,j}- (\mathcal{F}_\varnothing-\{\{j\}^c\})$. That is, $Y\subset W_1\subset W_2\subset\cdots\subset W_{n-1}$, $Y\subset Z_1\subset Z_2\subset\cdots\subset Z_{n-1}$, 
$W_p\not\subseteq Z_q$, and $Z_q\not\subseteq W_p$ for all $p,q\in[1,n-1]$.
Pick $w\in W_1-Z_{n-1}$ and $z\in Z_1-W_{n-1}$. 
Then we have $W_{n-1}\subset \{z\}^c$, 
$Z_{n-1}\subset \{w\}^c$, and   $W_p\not\subset\{z\}^c$ and $Z_q\not\subset\{w\}^c$ for all $p,q\in[1,n-1]$. 
Since $c(\{w\}^c)=(\{z\}^c)=2$, the sets $Y$, $W_p$'s,  $Z_q$'s, $\{w\}^c$ $\{z\}^c$ form a $\mathbf{V}_{n,n}$ of color 2.
The proof of $c(\{i\}^c)=c(\varnothing)=2$ can be obtained by exchanging $m$ and $n$ in the argument as well. So the proof is completed.
\end{proof}

\noindent{\em Proof of Theorem~\ref{MT}.} 
The lower bound is again obtained by simply coloring each set $X\in\mathbf{B}_{m+n}$ with 1 for $0\le |X|<m$ and with $2$ for $m\le |X|\le m+n$.

The upper bound will be proved by induction on $m+n$.
The theorem is true when $m+n=2$.
Let $m+n\ge 3$. Suppose that $c$ is a $2$-coloring of $\mathbf{B}_{m+n+1}$, and let  $\mathcal{F}_\varnothing$ be the family of subsets colored with $c(\varnothing)$. 
By Lemma~\ref{VMN}, if there exists some $\{i\}^c\in\mathcal{F}_\varnothing$, then we are done. 
Otherwise, say $c(\varnothing)=1$ and $c(\{i\}^c)=2$ for all $i\in[m+n+1]$. Then $\mathbf{B}_{m+n+1}-\{\{i\}^c\mid 1\le i\le m+n+1\}$ contains a $\mathbf{B}_{m+n}$. 
By inductive hypothesis, either there is a $\mathbf{V}_{m,m}$ of color 1 or a  $\mathbf{V}_{n-1,n-1}$ of color 2 in this $\mathbf{B}_{m+n}$. For the latter case, as the last part of Lemma~\ref{VMN}, we can concatenate this $\mathbf{V}_{n-1,n-1}$ with two sets $\{i\}^c$ and $\{j\}^c$ to obtain a  $\mathbf{V}_{n,n}$ of color 2.\qed

\section{Minimal Posets}

In Theorem~\ref{V1k} and Theorem~\ref{V2V1}, the colorings we gave to establish the lower bounds for the Boolean Ramsey numbers are all obtained by trivially coloring sets of the same sizes with one color.
In other words, the elements in the Boolean lattices are colored layer by layer.
To show Theorem~\ref{V1k} and Theorem~\ref{V2V1}, we will provide the colorings for the subposets of the minimal $(\mathbf{P}_1,\mathbf{P}_2,\ldots,\mathbf{P}_k)$-Ramsey poset avoiding the monochromatic $P_i$'s.
These coloring methods are somewhat more interesting than the previous layer colorings.   
\bigskip

\noindent{\em Proof of Theorem~\ref{V1k}.} We have proved that  $R_k(\mathbf{V}_{1,1})=k+1$. To show that 
$\mathbf{B}_{k+1}-\{[k+1]\}$ is minimal $(\mathbf{V}_{1,1};k)$-Ramsey, we construct a $k$-coloring of $\mathbf{B}_{k+1}-\{S\}$ not containing monochromatic $\mathbf{V}_{1,1}$ for every proper $S\subset [k+1]$.

If $S=\varnothing$, then there are only $k$ different sizes of sets in $\mathbf{B}_{k+1}-\{S\}$. Thus, we can color each set of size $i$ with $i$ for $1\le i\le k$ to avoid a monochromatic $\mathbf{V}_{1,1}$ in $\mathbf{B}_{k+1}$. 
For $S\neq\varnothing$, without loss of generality, let $S=[s]$.  We color the sets in the following way:
\[
c(X)=\left\{
\begin{array}{ll}
|X|,     &S\subset X,\\
|X|+1,     &S\not\subset X\mbox{ and }|X|\le k-1, \\
i, &X=[k+1]-\{i\}\mbox{ for }1\le i\le s.
\end{array}
\right.
\]
For this coloring, consider any three sets of color $i$. If $i\le s$, then the sizes of the three sets must be either $i-1$ or $k$. Note that there is only one set of size $k$ colored with $i$. So any three sets of color $i$ cannot form a $\mathbf{V}_{1,1}$ for $i\le s$. Else if $i\ge s+1$, then the size of the three sets must be $i-1$ or $i$. In this case, any two sets of size $i$ both contain $S$ as a subset, so does their intersection. On the other hand, a set of size $i-1$ colored with $i$ does not contain $S$. 
Thus, it is impossible to find three sets of color $i$ forming the poset $\mathbf{V}_{1,1}$ for $i\ge s+1$. So the proof is completed.
\qed

\noindent{\em Proof of Theorem~\ref{V2V1}.}
We first prove that $\mathbf{B}_{4}-\{S_1,S_2,[4]\}$ contains  either a $\mathbf{V}_{1,1}$ of color 1 or $\mathbf{V}_{2,2}$ of color 2 for every 2-coloring if 
$S_1=S_2^c$, or $|S_1|=|S_2|=2$.
Without loss of generality,  assume that $\{S_1,S_2\}$ is one of $\{[1],[1]^c\}$,  $\{[2],[2]^c\}$, and \{[2],\{1,3\}\}.
Consider any 2-coloring $c$ of $\mathbf{B}_{4}-\{S_1,S_2,[4]\}$. 
Let $\mathcal{F}_\varnothing$ be the collection of subsets of color $c(\varnothing)$. In our proof, we always assume that the coloring $c$ gives no $\mathbf{V}_{1,1}$ of color 1, but then there is a $\mathbf{V}_{2,2}$ of color 2.

\noindent{\em Case 1.} $\{S_1,S_2\}=\{[1],[1]^c\}$.

First suppose $c(\varnothing)=1$. Then $\mathcal{F}_\varnothing$ is a chain. 
There are two types of maximal chains in $\mathbf{B}_4-\{[1],[1]^c,[4]\}$. One of them intersects with $\{\{2,3\},\{2,4\},\{3,4\}\}$ 
and the other intersects with  
$\{\{1,2\},\{1,3\},\{1,4\}\}$. 
See the Hasse diagram shown in Figure~\ref{B4SS} to have a better understanding. 
By symmetry, we may assume that 
$\mathcal{F}_\varnothing$ is either contained in the chain $\{
\varnothing,\{4\},\{3,4\},\{1,3,4\}\}$ (first type) or in the chain $\{
\varnothing,\{4\},\{1,4\},\{1,3,4\}\}$ (second type).
In either case, the sets $\{2\}$, $\{2,3\}$, $\{2,4\}$, $\{1,2,3\}$, and $\{1,2,4\}$ form a $\mathbf{V}_{2,2}$ of color 2.

Next suppose $c(\varnothing)=2$.
By the pigeonhole principle, there exist two sets of size 3 which have the same color, so we may assume $c(\{1,2,3\})=c(\{1,3,4\})$. \begin{description}
\item{{\em Subcase 1.1.}} $c(\{1,2,3\})=c(\{1,3,4\})=1$.

Since there is no $\mathbf{V}_{1,1}$ of color 1,  $c(\{3\})=c(\{1,3\})=2$.
If $c(\{1,2,4\})=1$, then $c(\{4\})=c(\{1,4\})=2$, otherwise we have a $\mathbf{V}_{1,1}$ of color 1. Thus, $\{4\}$, $\{1,4\}$ $\{3\},\{1,3\}$ and $\varnothing$ form a $\mathbf{V}_{2,2}$ of color 2. Now consider the case $c(\{1,2,4\})=2$. If any of the sets $\{2\}$, $\{4\}$, and $\{2,4\}$ is colored with 2, then it with $\{1,2,4\}$, $\{3\}$, $\{1,3\}$ and $\varnothing$ form a $\mathbf{V}_{2,2}$ of color 2. Otherwise, all the three sets $\{2\}$, $\{4\}$, and $\{2,4\}$ are colored with 1. Then we can find a $\mathbf{V}_{1,1}$ of color 1 consisting of $\{4\}$, $\{2,4\}$, and $\{1,3,4\}$. This contradicts our assumption of $c$.

\item{{\em Subcase 1.2.}} 
$c(\{1,2,3\})=c(\{1,3,4\})=2$.

For the three sets $\{2\}$, $\{1,2\}$ and $\{2,3\}$, since there is no $\mathbf{V}_{1,1}$ of color 1, at least one of them is colored with 2. Similarly, at least one of $\{4\}$, $\{1,4\}$ and $\{3,4\}$ is colored with 2. 
Then pick one set of color 2 from each of the above two triples. The two sets together with $\{1,2,3\}$, $\{1,3,4\}$ and $\varnothing$ form a $\mathbf{V}_{2,2}$ of color 2. 

\end{description}

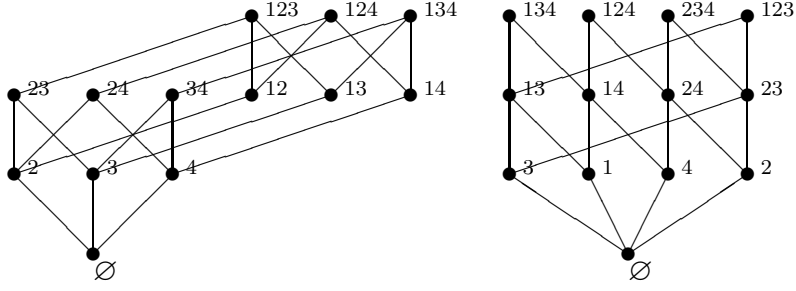
\begin{figure}[ht]
    \centering
\begin{picture}(160,100)
\put(30,0){$\varnothing$}
\put(30,10){\circle*{5}}
\put(0,40){\circle*{5}{\scriptsize 2}}
\put(30,40){\circle*{5}{\scriptsize 3}}
\put(60,40){\circle*{5}{\scriptsize 4}}
\put(0,70){\circle*{5}{\scriptsize 23}}
\put(30,70){\circle*{5}{\scriptsize 24}}
\put(60,70){\circle*{5}{\scriptsize 34}}
\put(90,70){\circle*{5}{\scriptsize 12}}
\put(120,70){\circle*{5}{\scriptsize 13}}
\put(150,70){\circle*{5}{\scriptsize 14}}
\put(90,100){\circle*{5}{\scriptsize 123}}
\put(120,100){\circle*{5}{\scriptsize 124}}
\put(150,100){\circle*{5}{\scriptsize 134}}
\put(0,40){\line(0,1){30}}
\put(0,40){\line(1,1){30}}
\put(0,40){\line(3,1){90}}
\put(30,40){\line(-1,1){30}}
\put(30,40){\line(1,1){30}}
\put(30,40){\line(3,1){90}}
\put(60,40){\line(0,1){30}}
\put(60,40){\line(-1,1){30}}
\put(60,40){\line(3,1){90}}

\put(90,70){\line(0,1){30}}
\put(90,70){\line(1,1){30}}
\put(0,70){\line(3,1){90}}
\put(120,70){\line(-1,1){30}}
\put(120,70){\line(1,1){30}}
\put(30,70){\line(3,1){90}}
\put(150,70){\line(0,1){30}}
\put(150,70){\line(-1,1){30}}
\put(60,70){\line(3,1){90}}

\put(30,10){\line(-1,1){30}}
\put(30,10){\line(0,1){30}}
\put(30,10){\line(1,1){30}}
\end{picture}
\qquad
\begin{picture}(100,100)
\put(0,40){\circle*{5}{\scriptsize 3}}
\put(30,40){\circle*{5}{\scriptsize 1}}
\put(60,40){\circle*{5}{\scriptsize 4}}
\put(90,40){\circle*{5}{\scriptsize 2}}
\put(0,70){\circle*{5}{\scriptsize 13}}
\put(30,70){\circle*{5}{\scriptsize 14}}
\put(60,70){\circle*{5}{\scriptsize 24}}
\put(90,70){\circle*{5}{\scriptsize 23}}
\put(0,100){\circle*{5}{\scriptsize 134}}
\put(30,100){\circle*{5}{\scriptsize 124}}
\put(60,100){\circle*{5}{\scriptsize 234}}
\put(90,100){\circle*{5}{\scriptsize 123}}
\put(45,10){\circle*{5}}
\put(45,0){$\varnothing$}
\put(45,10){\line(1,2){15}}
\put(45,10){\line(-1,2){15}}
\put(45,10){\line(3,2){45}}
\put(45,10){\line(-3,2){45}}

\put(0,40){\line(0,1){60}}
\put(30,40){\line(0,1){60}}
\put(60,40){\line(0,1){60}}
\put(90,40){\line(0,1){60}}

\put(0,40){\line(3,1){90}}
\put(30,40){\line(-1,1){30}}
\put(60,40){\line(-1,1){30}}
\put(90,40){\line(-1,1){30}}

\put(0,70){\line(3,1){90}}
\put(30,70){\line(-1,1){30}}
\put(60,70){\line(-1,1){30}}
\put(90,70){\line(-1,1){30}}

\end{picture}
    \caption{The Hasse diagrams of $\mathbf{B}_{4}-\{S,S^c,[4]\}$ for $S=[1]$ (left) and $S=[2]$ (right).}
    \label{B4SS}
\end{figure}

\noindent{\em Case 2. } $\{S_1,S_2\}=\{[2],[2]^c\}$.

Clearly, if $c(\varnothing)=1$ and there is no $\mathbf{V}_{1,1}$ of color 1, then $\mathcal{F}_\varnothing$ is a chain. 
Without loss of generality, we may assume  $\mathcal{F}_\varnothing\subset\{\varnothing,\{3\},\{1,3\},\{1,3,4\}\}$ or 
$\{\varnothing,\{3\},\{1,3\},\{1,2,3\}\}$.
In both cases, it is easy to find a $\mathbf{V}_{2,2}$ of color 2: e.g. $\{\{2\},\{2,4\},\{1,2,4\},\{2,3\},\{1,2,3\}\}$ for the former case, and
$\{\{4\},\{1,4\},\{1,3,4\},\{2,4\},\{2,3,4\}\}$ for the latter case.

Else, $c(\varnothing)=2$.  
If there is at most one set of size 3 colored with 2, then we may assume $c(\{1,2,3\})=c(\{1,2,4\})=c(\{1,3,4\})=1$. For the sets $\{3\}$, $\{1,3\}$, $\{4\}$, and $\{1,4\}$, either one of them has color 1 and we have a $\mathbf{V}_{1,1}$ of color 1, or all of them has color 2 and the four sets together with $\varnothing$ form a $\mathbf{V}_{2,2}$ of color 2.
Consider the case that at least two sets of size 3 colored with 2. 
Suppose we have two sets $X$ and $Y$ of size 3 colored with $2$ and $X\cap Y\in\{[2],[2]^c\}$, say $X=\{1,3,4\}$ and $Y=\{2,3,4\}$. Then either we have one of the two triples   $\{1\},\{1,3\},\{1,4\}$ and $\{2\},\{2,3\},\{2,4\}$ consisting of all sets of color 1, which form a $\mathbf{V}_{1,1}$ of color 1, or there is at least a set of color 2 in each triple and we have a $\mathbf{V}_{2,2}$ of color 2 formed by them with $X$, $Y$, and $\varnothing$. 
Another possibility is we have  $X\cap Y\not\in\{[2],[2]^c\}$, say $X=\{1,2,4\}$ and $Y=\{1,3,4\}$, 
and the other two sets of size 3, $\{1,2,3\}$ and $\{2,3,4\}$, are colored with 1. Now if $\{2\}$ or $\{3\}$ is colored with 1, then we have a $\mathbf{V}_{1,1}$ of color 1, otherwise we have a  $\mathbf{V}_{2,2}$ of color 2 consisting of $\varnothing$, $\{2\}$, $\{3\}$, $\{1,2,4\}$, and $\{1,3,4\}$.

\noindent{\em Case 3.}
$\{S_1,S_2\}=\{[2],\{1,3\}\}$.

As before, we first assume $c(\varnothing)=1$. Let $\mathcal{F}$ be the family consisting of the sets $\{2\}$, $\{3\}$, $\{4\}$, $\{2,3\}$, $\{2,4\}$, $\{3,4\}$, $\{1,2,3\}$, $\{1,2,4\}$, and $\{1,3,4\}$ in $\mathbf{B}_4-\{[1],\{1,3\},[4]\}$ and observe the Hasse diagrams formed by these sets in Figure~\ref{case3}.
Note that $\mathcal{F}_\varnothing$ is a chain. It is straightforward to see that $\mathcal{F}-\mathcal{F}_\varnothing$ contains a $\mathbf{V}_{2,2}$ of color 2.

\begin{figure}[ht]
    \centering
\begin{picture}(100,70)
\put(0,30){\circle*{5}{\scriptsize 23}}
\put(0,60){\circle*{5}{\scriptsize 123}}
\put(0,0){\circle*{5}{\scriptsize 2}}
\put(30,0){\circle*{5}{\scriptsize 3}}
\put(30,30){\circle*{5}{\scriptsize 24}}
\put(30,60){\circle*{5}{\scriptsize 124}}
\put(60,0){\circle*{5}{\scriptsize 4}}
\put(60,30){\circle*{5}{\scriptsize 34}}
\put(60,60){\circle*{5}{\scriptsize 134}}
\put(30,0){\line(1,1){30}}
\put(30,0){\line(-1,1){30}}
\put(0,0){\line(1,1){30}}
\put(60,0){\line(-1,1){30}}
\put(0,0){\line(0,1){60}}
\put(30,30){\line(0,1){30}}
\put(60,0){\line(0,1){60}}
\end{picture}
    \caption{Some sets in $\mathbf{B}_4-\{[2],\{1,3\},[4]\}$.}
    \label{case3}
\end{figure}
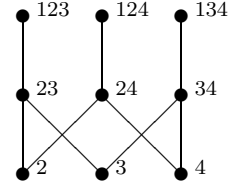
Now assume $c(\varnothing)=2$. We discuss the colors on the above nine sets. Particularly, we consider the possibilities of the single element sets. 
\begin{description}
\item{{\em Subcase 3.1.}} All the three sets $\{2\}$, $\{3\}$, and $\{4\}$ are of color 1.

The three chains $\{\{2,3\},\{1,2,3\}\}$, $\{\{2,4\},\{1,2,4\}\}$, and $\{\{3,4\},\{1,3,4\}\}$ are mutually incomparable. 
At least two of the them contain sets of color 2 only by the assumption of no  $\mathbf{V}_{1,1}$ of color 1. The two chains and $\varnothing$ together form a $\mathbf{V}_{2,2}$ of color 2. 

\item{{\em Subcase 3.2.}} Two of the three sets $\{2\}$, $\{3\}$, and $\{4\}$ are of color 1.

By symmetry, we can assume  $c(\{2\})=c(\{3\})=1$ and $c(\{4\})=2$. If  $c(\{2,3\})=1 $ or $c(\{1,2,3\})=1$, then all the sets in  $\{\{2,4\},\{1,2,4\}\}$ and $\{\{3,4\},\{1,3,4\}\}$ are of color 2. Thus, together with $\varnothing$, they form a $\mathbf{V}_{2,2}$ of color 2. So we conclude that $c(\{2,3\})=c(\{1,2,3\})=2$. Now if any set in $\{\{2,4\},\{1,2,4\}\}\cup\{\{3,4\},\{1,3,4\}\}$ is of color 2, then this set with $\{4\}$, $\{2,3\}$, $\{1,2,3\}$ and $\varnothing$ form a $\mathbf{V}_{2,2}$ of color 2. So all the four sets in $\{\{2,4\},\{1,2,4\}\}\cup\{\{3,4\},\{1,3,4\}\}$ are of color 1. This forces $c(\{2,3,4\})=2$ and $c(\{1\})=2$. 
However, we then have $\{1\}$, $\{1,2,3\}$, $\{4\}$, $\{2,3,4\}$, and $\varnothing$ forming a $\mathbf{V}_{2,2}$ of color 2. 

\item{{\em Subcase 3.3.}} One of the three sets $\{2\}$, $\{3\}$, and $\{4\}$ is of color 1.

First assume $c(\{2\})=1$ and $c(\{3\})=c(\{4\})=2$. 
If there exists a pair of sets, one from 
$\{\{2,3\},\{1,2,3\}\}$ and the other from $\{\{2,4\},\{1,2,4\}\}$, of the same color, then we can find a $\mathbf{V}_{1,1}$ of color 1 or a $\mathbf{V}_{2,2}$ of color 2. Thus, we must have 
$c(\{2,3\})=c(\{1,2,3\})=1$ and  $c(\{2,4\})=c(\{1,2,4\}\})=2$, or the other way around. 
In either cases, we can deduce that $c(\{3,4\})=c(\{1,3,4\})=1$ otherwise we can find four sets of color 2 together with $\varnothing$ form a $\mathbf{V}_{2,2}$. Indeed, if only one of the three sets $\{2\}$, $\{3\}$, and $\{4\}$ is of color 1, then we can always conclude that two of the three chains $\{\{2,3\},\{1,2,3\}\}$, $\{\{2,4\},\{1,2,4\}\}$, and $\{\{3,4\},\{1,3,4\}\}$ contain sets of color 1 and the third one contains sets of color 2 only. This forces $c(\{2,3,4\})=c(\{1\})=2$ since 
\{1\} is contained in any 3-set, $\{2,3,4\}$ contains any 2-set in $\mathcal{F}$, and there is no $\mathbf{V}_{1,1}$ of color 1. 
Eventually, we can pick some $\{x\}$ and $[4]-\{x\}$ from $\mathcal{F}$ together with $\{2,3,4\}$, $\{1\}$, and $\varnothing$ forming a $\mathbf{V}_{2,2}$ of color 2. 

\item{{\em Subcase 3.4.}} All the three sets $\{2\}$, $\{3\}$, and $\{4\}$ are of color 2.

If two of the three chains $\{\{2,3\},\{1,2,3\}\}$, $\{\{2,4\},\{1,2,4\}\}$, and $\{\{3,4\},\{1,3,4\}\}$ contain sets of color 2, then with $\varnothing$ we can easily find a $\mathbf{V}_{2,2}$ of color 2. 
Thus, we assume two of them contain sets of color 1 only. Again, by the reason no $\mathbf{V}_{1,1}$ of color 1, we conclude that $c(\{2,3,4\})=c(\{1\})=2$. Moreover, all 3-sets except for $\{2,3,4\}$ are of color 1 otherwise we have a $\mathbf{V}_{2,2}$ of color 2 as the Subcase 3.3. However, this leads to the situation that we have a $\mathbf{V}_{1,1}$ if $c(\{1,4\})=1$ (with $\{1,2,4\}$ and $\{1,3,4\}$) or 
a $\mathbf{V}_{2,2}$ if $c(\{1,4\})=2$ (with 
$\varnothing$, $\{1\}$,$\{2\}$ or $\{3\}$, $\{2,3,4\}$). So we conclude that a $\mathbf{V}_{2,2}$ of color 2 exists if there is no $\mathbf{V}_{1,1}$ of color 1. 
\end{description}

Now we show no other type of minimal $(\mathbf{V}_{1,1},\mathbf{V}_{2,2})$-Ramsey subposet exists.
Consider $\mathbf{B}_4-\{S,[4]\}$. If $S\neq \varnothing$, then it contains some of our previous three types of posets as a  subposet. So they are not minimal. If $S=\varnothing$, we color each set $X\in\mathbf{B}_4-\{\varnothing,[4]\}$ with 1 if $|X|=1$, otherwise with 2. Hence this subposet does not have the Ramsey property.
For the posets of type  $\mathbf{B}_4-\{S_1,S_2,[4]\}$ with $S_1$ and $S_2$ not satisfying our conditions, we give the colorings for each type in  Figure~\ref{Colorings0}, Figure~\ref{Colorings1}, and Figure~\ref{Colorings2}. So they do not have the Ramsey property. 

To complete the proof, we prove that every subposet $\mathbf{Q}$ of $\mathbf{P}$ with $|\mathbf{Q}|=|\mathbf{P}|-1$, where $\mathbf{P}$ is isomorphic to one of the  previous three cases, does not have the Ramsey property. 
In Figure~\ref{32-subsets}, we demonstrate the colorings of the subposets $\mathbf{Q}$ of $\mathbf{P}=\mathbf{B}_4-\{S_1,S_2,[4]\}$ with $|S_1|=|S_2|=2$ obtained by removing one more set of size 2. 
For other subposets $\mathbf{Q}$ of $\mathbf{P}$ with $|\mathbf{Q}|=|\mathbf{P}|-1$  not isomorphic to those shown in Figure~\ref{32-subsets}, they must be isomorphic to the subposets of some posets demonstrated in Figure~\ref{Colorings0}, Figure~\ref{Colorings1}, and Figure~\ref{Colorings2}. Hence they do not have the Ramsey property, and our three cases are minimal. 
\qed

\begin{figure}[ht]
    \centering
\begin{tabular}{ccc}
\begin{picture}(120,80)
\put(40,25){\circle*{4}}
\put(40,10){color 2}
\put(40,55){\circle{4}}
\put(40,40){color 1}
\end{picture}&
\begin{picture}(120,80)
\put(60,30){\line(-1,-2){10}} \put(80,30){\line(-3,-2){30}}
\put(0,50){\line(5,-4){50}}
\put(60,30){\line(-2,1){40}} \put(60,30){\line(0,1){20}}
\put(60,30){\line(2,1){40}} \put(80,30){\line(-2,1){40}}
\put(80,30){\line(0,1){20}} \put(80,30){\line(1,1){20}}
\put(20,70){\line(-1,-1){20}} 
\put(20,70){\line(0,-1){20}}
\put(20,70){\line(2,-1){40}} \put(40,70){\line(-2,-1){40}}
\put(40,70){\line(0,-1){20}} \put(40,70){\line(2,-1){40}}
\put(60,70){\line(-2,-1){40}} \put(60,70){\line(-1,-1){20}}
\put(60,70){\line(2,-1){40}} 
\put(80,70){\line(-1,-1){20}}
\put(80,70){\line(0,-1){20}} \put(80,70){\line(1,-1){20}}
\put(50,0){{\scriptsize$ \varnothing$} } 
\put(50,10){\circle*{4} } 
\put(60,30){\circle*{4}{\scriptsize 3}}
\put(80,30){\circle*{4}{\scriptsize 4}} 
\put(0,50){\circle*{4}{\scriptsize 12}}
\put(20,50){\circle*{4}{\scriptsize 13}} 
\put(40,50){\circle*{4}{\scriptsize 14}}
\put(60,50){\circle*{4}{\scriptsize 23}} 
\put(80,50){\circle*{4}{\scriptsize 24}}
\put(100,50){\circle*{4}{\scriptsize 34}} 
\put(20,70){\circle*{4}{\scriptsize 123}}
\put(40,70){\circle*{4}{\scriptsize 124}} 
\put(60,70){\circle*{4}{\scriptsize 134}}
\put(80,70){\circle*{4}{\scriptsize 234}}
\put(80,30){\textcolor{white}{\circle*{4}}}
\put(100,50){\textcolor{white}{\circle*{4}}}
\put(80,70){\textcolor{white}{\circle*{4}}}
\put(50,10){\textcolor{white}{\circle*{4}}}
\put(80,30){\circle{4}}
\put(100,50){\circle{4}} 
\put(80,70){\circle{4}}
\put(50,10){\circle{4}} 
\end{picture}
&
\begin{picture}(120,70)
\put(40,30){\line(1,-2){10}}
\put(60,30){\line(-1,-2){10}} \put(80,30){\line(-3,-2){30}}
\put(40,30){\line(-2,1){40}}
\put(40,30){\line(2,1){40}}
\put(60,30){\line(-2,1){40}} 
\put(60,30){\line(2,1){40}} \put(80,30){\line(-2,1){40}}
\put(80,30){\line(0,1){20}} \put(80,30){\line(1,1){20}}
\put(20,70){\line(-1,-1){20}} 
\put(20,70){\line(0,-1){20}}
\put(40,70){\line(-2,-1){40}}
\put(40,70){\line(0,-1){20}} \put(40,70){\line(2,-1){40}}
\put(60,70){\line(-2,-1){40}} \put(60,70){\line(-1,-1){20}}
\put(60,70){\line(2,-1){40}} 
\put(80,70){\line(0,-1){20}} \put(80,70){\line(1,-1){20}}
\put(50,0){{\scriptsize$ \varnothing$} } 
\put(50,10){\circle*{4} } 
\put(40,30){\circle*{4}{\scriptsize 2}} 
\put(60,30){\circle*{4}{\scriptsize 3}}
\put(80,30){\circle*{4}{\scriptsize 4}} 
\put(0,50){\circle*{4}{\scriptsize 12}}
\put(20,50){\circle*{4}{\scriptsize 13}} 
\put(40,50){\circle*{4}{\scriptsize 14}}
\put(80,50){\circle*{4}{\scriptsize 24}}
\put(100,50){\circle*{4}{\scriptsize 34}} 
\put(20,70){\circle*{4}{\scriptsize 123}}
\put(40,70){\circle*{4}{\scriptsize 124}} 
\put(60,70){\circle*{4}{\scriptsize 134}}
\put(80,70){\circle*{4}{\scriptsize 234}}
\put(40,30){\textcolor{white}{\circle*{4}}}
\put(60,30){\textcolor{white}{\circle*{4}}}
\put(0,50){\textcolor{white}{\circle*{4}}}
\put(20,50){\textcolor{white}{\circle*{4}}}
\put(40,70){\textcolor{white}{\circle*{4}}}
\put(60,70){\textcolor{white}{\circle*{4}}}
\put(40,30){\circle{4}}
\put(60,30){\circle{4}} 
\put(0,50){\circle{4}}
\put(20,50){\circle{4}} 
\put(40,70){\circle{4}} 
\put(60,70){\circle{4}} 
\end{picture}
\\
&$\{1\},\{2\}$ & $\{1\},\{2,3\}$ 
\end{tabular}
\caption{Colorings of $\mathbf{B}_4-\{S_1,S_2,[4]\}$ with $|S_1\cap S_2|=0$.}
\label{Colorings0}

\begin{tabular}{ccc}
\begin{picture}(120,90)
\put(40,30){\line(1,-2){10}}
\put(60,30){\line(-1,-2){10}} \put(80,30){\line(-3,-2){30}}
\put(40,30){\line(1,1){20}} \put(40,30){\line(2,1){40}}
\put(60,30){\line(-2,1){40}} \put(60,30){\line(0,1){20}}
\put(60,30){\line(2,1){40}} \put(80,30){\line(-2,1){40}}
\put(80,30){\line(0,1){20}} \put(80,30){\line(1,1){20}}
\put(20,70){\line(0,-1){20}}
\put(20,70){\line(2,-1){40}} 
\put(40,70){\line(0,-1){20}} \put(40,70){\line(2,-1){40}}
\put(60,70){\line(-2,-1){40}} \put(60,70){\line(-1,-1){20}}
\put(60,70){\line(2,-1){40}} 
\put(80,70){\line(-1,-1){20}}
\put(80,70){\line(0,-1){20}} \put(80,70){\line(1,-1){20}}
\put(50,0){{\scriptsize$ \varnothing$} } 
\put(50,10){\circle*{4} } 
\put(40,30){\circle*{4}{\scriptsize 2}} 
\put(60,30){\circle*{4}{\scriptsize 3}}
\put(80,30){\circle*{4}{\scriptsize 4}} 
\put(20,50){\circle*{4}{\scriptsize 13}} 
\put(40,50){\circle*{4}{\scriptsize 14}}
\put(60,50){\circle*{4}{\scriptsize 23}} 
\put(80,50){\circle*{4}{\scriptsize 24}}
\put(100,50){\circle*{4}{\scriptsize 34}} 
\put(20,70){\circle*{4}{\scriptsize 123}}
\put(40,70){\circle*{4}{\scriptsize 124}} 
\put(60,70){\circle*{4}{\scriptsize 134}}
\put(80,70){\circle*{4}{\scriptsize 234}}
\put(20,50){\textcolor{white}{\circle*{4}}}
\put(40,50){\textcolor{white}{\circle*{4}}}
\put(60,50){\textcolor{white}{\circle*{4}}}
\put(80,50){\textcolor{white}{\circle*{4}}}
\put(100,50){\textcolor{white}{\circle*{4}}}
\put(20,70){\textcolor{white}{\circle*{4}}}
\put(40,70){\textcolor{white}{\circle*{4}}}
\put(20,50){\circle{4}}
\put(40,50){\circle{4}}
\put(60,50){\circle{4}} 
\put(80,50){\circle{4}}
\put(100,50){\circle{4}} 
\put(20,70){\circle{4}}
\put(40,70){\circle{4}} 
\end{picture}
&
\begin{picture}(120,70)
\put(40,30){\line(1,-2){10}}
\put(60,30){\line(-1,-2){10}} \put(80,30){\line(-3,-2){30}}
\put(40,30){\line(-2,1){40}}
\put(40,30){\line(1,1){20}} \put(40,30){\line(2,1){40}}
\put(60,30){\line(-2,1){40}} \put(60,30){\line(0,1){20}}
\put(60,30){\line(2,1){40}} \put(80,30){\line(-2,1){40}}
\put(80,30){\line(0,1){20}} \put(80,30){\line(1,1){20}}
\put(40,70){\line(-2,-1){40}}
\put(40,70){\line(0,-1){20}} \put(40,70){\line(2,-1){40}}
\put(60,70){\line(-2,-1){40}} \put(60,70){\line(-1,-1){20}}
\put(60,70){\line(2,-1){40}} 
\put(80,70){\line(-1,-1){20}}
\put(80,70){\line(0,-1){20}} \put(80,70){\line(1,-1){20}}
\put(50,0){{\scriptsize$ \varnothing$} } 
\put(50,10){\circle*{4} } 
\put(40,30){\circle*{4}{\scriptsize 2}} 
\put(60,30){\circle*{4}{\scriptsize 3}}
\put(80,30){\circle*{4}{\scriptsize 4}} 
\put(0,50){\circle*{4}{\scriptsize 12}}
\put(20,50){\circle*{4}{\scriptsize 13}} 
\put(40,50){\circle*{4}{\scriptsize 14}}
\put(60,50){\circle*{4}{\scriptsize 23}} 
\put(80,50){\circle*{4}{\scriptsize 24}}
\put(100,50){\circle*{4}{\scriptsize 34}} 
\put(40,70){\circle*{4}{\scriptsize 124}} 
\put(60,70){\circle*{4}{\scriptsize 134}}
\put(80,70){\circle*{4}{\scriptsize 234}}

\put(80,70){\textcolor{white}{\circle*{4}}}
\put(100,50){\textcolor{white}{\circle*{4}}}
\put(80,30){\textcolor{white}{\circle*{4}}}
\put(50,10){\textcolor{white}{\circle*{4}}}
\put(80,70){\circle{4}} 
\put(100,50){\circle{4}} 
\put(80,30){\circle{4}} 
\put(50,10){\circle{4}} 

\end{picture}
&
\begin{picture}(120,70)
\put(20,30){\line(3,-2){30}} \put(40,30){\line(1,-2){10}}
\put(60,30){\line(-1,-2){10}} \put(80,30){\line(-3,-2){30}}
\put(20,30){\line(0,1){20}}
\put(20,30){\line(1,1){20}} 
\put(40,30){\line(1,1){20}} \put(40,30){\line(2,1){40}}
\put(60,30){\line(-2,1){40}} \put(60,30){\line(0,1){20}}
\put(60,30){\line(2,1){40}} \put(80,30){\line(-2,1){40}}
\put(80,30){\line(0,1){20}} \put(80,30){\line(1,1){20}}
\put(20,70){\line(0,-1){20}}
\put(20,70){\line(2,-1){40}} 
\put(40,70){\line(0,-1){20}} \put(40,70){\line(2,-1){40}}
\put(60,70){\line(-2,-1){40}} \put(60,70){\line(-1,-1){20}}
\put(60,70){\line(2,-1){40}} 
\put(50,0){{\scriptsize$ \varnothing$} } 
\put(50,10){\circle*{4} } 
\put(20,30){\circle*{4}{\scriptsize 1}}
\put(40,30){\circle*{4}{\scriptsize 2}} 
\put(60,30){\circle*{4}{\scriptsize 3}}
\put(80,30){\circle*{4}{\scriptsize 4}} 
\put(20,50){\circle*{4}{\scriptsize 13}} 
\put(40,50){\circle*{4}{\scriptsize 14}}
\put(60,50){\circle*{4}{\scriptsize 23}} 
\put(80,50){\circle*{4}{\scriptsize 24}}
\put(100,50){\circle*{4}{\scriptsize 34}} 
\put(20,70){\circle*{4}{\scriptsize 123}}
\put(40,70){\circle*{4}{\scriptsize 124}} 
\put(60,70){\circle*{4}{\scriptsize 134}}
\put(20,50){\textcolor{white}{\circle*{4}}}
\put(40,50){\textcolor{white}{\circle*{4}}}
\put(60,50){\textcolor{white}{\circle*{4}}}
\put(80,50){\textcolor{white}{\circle*{4}}}
\put(100,50){\textcolor{white}{\circle*{4}}}
\put(20,70){\textcolor{white}{\circle*{4}}}
\put(40,70){\textcolor{white}{\circle*{4}}}
\put(20,50){\circle{4}}
\put(40,50){\circle{4}}
\put(60,50){\circle{4}} 
\put(80,50){\circle{4}}
\put(100,50){\circle{4}} 
\put(20,70){\circle{4}}
\put(40,70){\circle{4}} 
\end{picture}
\\
$\{1\},\{1,2\}$ & $\{1\},\{1,2,3\}$ & $\{1,2\},\{2,3,4\}$ 
\end{tabular}
    \caption{Colorings of $\mathbf{B}_4-\{S_1,S_2,[4]\}$ with $|S_1\cap S_2|=1$.}
    \label{Colorings1}

\begin{tabular}{cc}
\begin{picture}(120,90)
\put(20,30){\line(3,-2){30}} \put(40,30){\line(1,-2){10}}
\put(60,30){\line(-1,-2){10}} \put(80,30){\line(-3,-2){30}}
\put(20,30){\line(0,1){20}}
\put(20,30){\line(1,1){20}} 
\put(40,30){\line(1,1){20}} \put(40,30){\line(2,1){40}}
\put(60,30){\line(-2,1){40}} \put(60,30){\line(0,1){20}}
\put(60,30){\line(2,1){40}} \put(80,30){\line(-2,1){40}}
\put(80,30){\line(0,1){20}} \put(80,30){\line(1,1){20}}
\put(40,70){\line(0,-1){20}} \put(40,70){\line(2,-1){40}}
\put(60,70){\line(-2,-1){40}} \put(60,70){\line(-1,-1){20}}
\put(60,70){\line(2,-1){40}} 
\put(80,70){\line(-1,-1){20}}
\put(80,70){\line(0,-1){20}} \put(80,70){\line(1,-1){20}}
\put(50,0){{\scriptsize$ \varnothing$} } 
\put(50,10){\circle*{4} } 
\put(20,30){\circle*{4}{\scriptsize 1}}
\put(40,30){\circle*{4}{\scriptsize 2}} 
\put(60,30){\circle*{4}{\scriptsize 3}}
\put(80,30){\circle*{4}{\scriptsize 4}} 
\put(20,50){\circle*{4}{\scriptsize 13}} 
\put(40,50){\circle*{4}{\scriptsize 14}}
\put(60,50){\circle*{4}{\scriptsize 23}} 
\put(80,50){\circle*{4}{\scriptsize 24}}
\put(100,50){\circle*{4}{\scriptsize 34}} 
\put(40,70){\circle*{4}{\scriptsize 124}} 
\put(60,70){\circle*{4}{\scriptsize 134}}
\put(80,70){\circle*{4}{\scriptsize 234}}
\put(50,10){\textcolor{white}{\circle*{4}}}
\put(80,30){\textcolor{white}{\circle*{4}}}
\put(100,50){\textcolor{white}{\circle*{4}}}
\put(80,70){\textcolor{white}{\circle*{4}}}
\put(50,10){\circle{4}}
\put(80,30){\circle{4}}
\put(80,50){\circle{4}}
\put(100,50){\circle{4}} 
\put(80,70){\circle{4}} 
\end{picture}
& 
\begin{picture}(120,70)
\put(20,30){\line(3,-2){30}} \put(40,30){\line(1,-2){10}}
\put(60,30){\line(-1,-2){10}} \put(80,30){\line(-3,-2){30}}
\put(20,30){\line(-1,1){20}} 
\put(20,30){\line(0,1){20}}
\put(20,30){\line(1,1){20}} \put(40,30){\line(-2,1){40}}
\put(40,30){\line(1,1){20}} \put(40,30){\line(2,1){40}}
\put(60,30){\line(-2,1){40}} \put(60,30){\line(0,1){20}}
\put(60,30){\line(2,1){40}} \put(80,30){\line(-2,1){40}}
\put(80,30){\line(0,1){20}} \put(80,30){\line(1,1){20}}
\put(40,70){\line(-2,-1){40}}
\put(40,70){\line(0,-1){20}} \put(40,70){\line(2,-1){40}}
\put(60,70){\line(-2,-1){40}} \put(60,70){\line(-1,-1){20}}
\put(60,70){\line(2,-1){40}} 
\put(50,0){{\scriptsize$ \varnothing$} } 
\put(50,10){\circle*{4} } 
\put(20,30){\circle*{4}{\scriptsize 1}}
\put(40,30){\circle*{4}{\scriptsize 2}} 
\put(60,30){\circle*{4}{\scriptsize 3}}
\put(80,30){\circle*{4}{\scriptsize 4}} 
\put(0,50){\circle*{4}{\scriptsize 12}}
\put(20,50){\circle*{4}{\scriptsize 13}} 
\put(40,50){\circle*{4}{\scriptsize 14}}
\put(60,50){\circle*{4}{\scriptsize 23}} 
\put(80,50){\circle*{4}{\scriptsize 24}}
\put(100,50){\circle*{4}{\scriptsize 34}} 
\put(40,70){\circle*{4}{\scriptsize 124}} 
\put(60,70){\circle*{4}{\scriptsize 134}}
\put(50,10){\textcolor{white}{\circle*{4}}}
\put(80,30){\textcolor{white}{\circle*{4}}}
\put(100,50){\textcolor{white}{\circle*{4}}}
\put(60,70){\textcolor{white}{\circle*{4}}}
\put(50,10){\circle{4}}
\put(80,30){\circle{4}}
\put(100,50){\circle{4}}
\put(60,70){\circle{4}} 
\end{picture}
  \\
$\{1,2\},\{1,2,3\}$    &$\{1,2,3\},\{2,3,4\}$
\end{tabular}
    \caption{Colorings of $\mathbf{B}_4-\{S_1,S_2,[4]\}$ with $|S_1\cap S_2|=2$.}
    \label{Colorings2}
\end{figure}

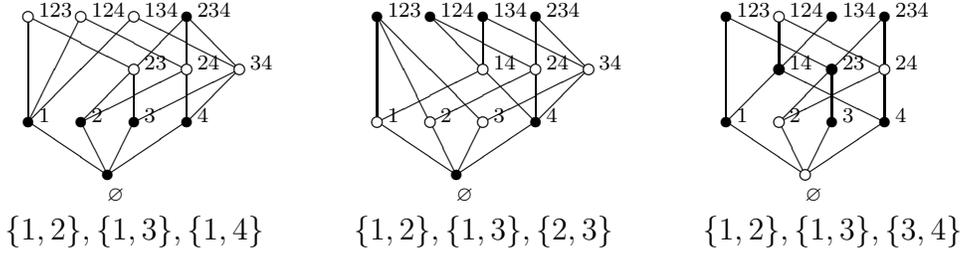
\begin{figure}[ht]
    \centering
\begin{tabular}{ccc}
\begin{picture}(120,80)
\put(20,30){\line(3,-2){30}} 
\put(40,30){\line(1,-2){10}}
\put(60,30){\line(-1,-2){10}} \put(80,30){\line(-3,-2){30}}
\put(20,30){\line(1,2){20}} 
\put(20,30){\line(0,1){40}}
\put(20,30){\line(1,1){40}} 
\put(40,30){\line(1,1){20}} \put(40,30){\line(2,1){40}}
\put(60,30){\line(0,1){20}}
\put(60,30){\line(2,1){40}} 
\put(80,30){\line(0,1){20}} \put(80,30){\line(1,1){20}}
\put(20,70){\line(2,-1){40}} 
\put(40,70){\line(2,-1){40}}
\put(60,70){\line(2,-1){40}} 
\put(80,70){\line(-1,-1){20}}
\put(80,70){\line(0,-1){20}} \put(80,70){\line(1,-1){20}}
\put(50,0){{\scriptsize$ \varnothing$} } 
\put(50,10){\circle*{4} } 
\put(20,30){\circle*{4}{\scriptsize 1}}
\put(40,30){\circle*{4}{\scriptsize 2}} 
\put(60,30){\circle*{4}{\scriptsize 3}}
\put(80,30){\circle*{4}{\scriptsize 4}} 
\put(60,50){\circle*{4}{\scriptsize 23}} 
\put(80,50){\circle*{4}{\scriptsize 24}}
\put(100,50){\circle*{4}{\scriptsize 34}} 
\put(20,70){\circle*{4}{\scriptsize 123}}
\put(40,70){\circle*{4}{\scriptsize 124}} 
\put(60,70){\circle*{4}{\scriptsize 134}}
\put(80,70){\circle*{4}{\scriptsize 234}}
\put(60,70){\textcolor{white}{\circle*{4}}}
\put(60,50){\textcolor{white}{\circle*{4}}}
\put(80,50){\textcolor{white}{\circle*{4}}}
\put(100,50){\textcolor{white}{\circle*{4}}}
\put(20,70){\textcolor{white}{\circle*{4}}}
\put(40,70){\textcolor{white}{\circle*{4}}}
\put(60,70){\circle{4}}
\put(60,50){\circle{4}} 
\put(80,50){\circle{4}}
\put(100,50){\circle{4}} 
\put(20,70){\circle{4}}
\put(40,70){\circle{4}} 
\end{picture}
&
\begin{picture}(120,70)
\put(20,30){\line(3,-2){30}} 
\put(40,30){\line(1,-2){10}}
\put(60,30){\line(-1,-2){10}} \put(80,30){\line(-3,-2){30}}
\put(20,30){\line(0,1){40}}
\put(20,30){\line(2,1){40}} \put(40,30){\line(2,1){40}}
\put(80,30){\line(-1,1){20}}
\put(60,30){\line(2,1){40}} 
\put(80,30){\line(0,1){20}} \put(80,30){\line(1,1){20}}
\put(20,70){\line(1,-1){40}} 
\put(20,70){\line(1,-2){20}} 
\put(40,70){\line(1,-1){20}}
\put(60,70){\line(0,-1){20}}
\put(60,70){\line(2,-1){40}} 
\put(80,70){\line(1,-1){20}}
\put(80,70){\line(0,-1){20}} \put(40,70){\line(2,-1){40}}
\put(50,0){{\scriptsize$ \varnothing$} } 
\put(50,10){\circle*{4} } 
\put(20,30){\circle*{4}{\scriptsize 1}}
\put(40,30){\circle*{4}{\scriptsize 2}} 
\put(60,30){\circle*{4}{\scriptsize 3}}
\put(80,30){\circle*{4}{\scriptsize 4}} 
\put(60,50){\circle*{4}{\scriptsize 14}} 
\put(80,50){\circle*{4}{\scriptsize 24}}
\put(100,50){\circle*{4}{\scriptsize 34}} 
\put(20,70){\circle*{4}{\scriptsize 123}}
\put(40,70){\circle*{4}{\scriptsize 124}} 
\put(60,70){\circle*{4}{\scriptsize 134}}
\put(80,70){\circle*{4}{\scriptsize 234}}
\put(60,50){\textcolor{white}{\circle*{4}}}
\put(80,50){\textcolor{white}{\circle*{4}}}
\put(100,50){\textcolor{white}{\circle*{4}}}
\put(20,30){\textcolor{white}{\circle*{4}}}
\put(60,30){\textcolor{white}{\circle*{4}}}
\put(40,30){\textcolor{white}{\circle*{4}}}
\put(60,30){\circle{4}}
\put(60,50){\circle{4}} 
\put(80,50){\circle{4}}
\put(100,50){\circle{4}} 
\put(20,30){\circle{4}}
\put(40,30){\circle{4}} 
\end{picture}
&
\begin{picture}(120,70)
\put(20,30){\line(3,-2){30}} \put(40,30){\line(1,-2){10}}
\put(60,30){\line(-1,-2){10}} \put(80,30){\line(-3,-2){30}}
\put(20,30){\line(0,1){40}}
\put(20,30){\line(1,1){20}} 
\put(40,30){\line(1,1){20}} \put(40,30){\line(2,1){40}}
\put(60,30){\line(0,1){20}}
\put(80,30){\line(-2,1){40}}
\put(80,30){\line(0,1){20}} 
\put(20,70){\line(2,-1){40}} 
\put(40,70){\line(0,-1){20}} \put(40,70){\line(2,-1){40}}
\put(60,70){\line(-1,-1){20}}
\put(80,70){\line(-1,-1){20}}
\put(80,70){\line(0,-1){20}} 
\put(50,0){{\scriptsize$ \varnothing$} } 
\put(50,10){\circle*{4} } 
\put(20,30){\circle*{4}{\scriptsize 1}}
\put(40,30){\circle*{4}{\scriptsize 2}} 
\put(60,30){\circle*{4}{\scriptsize 3}}
\put(80,30){\circle*{4}{\scriptsize 4}} 
\put(40,50){\circle*{4}{\scriptsize 14}}
\put(60,50){\circle*{4}{\scriptsize 23}} 
\put(80,50){\circle*{4}{\scriptsize 24}}
\put(20,70){\circle*{4}{\scriptsize 123}}
\put(40,70){\circle*{4}{\scriptsize 124}} 
\put(60,70){\circle*{4}{\scriptsize 134}}
\put(80,70){\circle*{4}{\scriptsize 234}}
\put(40,30){\textcolor{white}{\circle*{4}}}
\put(50,10){\textcolor{white}{\circle*{4}}}
\put(80,50){\textcolor{white}{\circle*{4}}}
\put(100,50){\textcolor{white}{\circle*{4}}}
\put(40,70){\textcolor{white}{\circle*{4}}}
\put(40,30){\circle{4}}
\put(40,50){\circle{4}}
\put(60,50){\circle{4}} 
\put(80,50){\circle{4}}
\put(50,10){\circle{4}} 
\put(40,70){\circle{4}} 
\end{picture}
\\
$\{1,2\},\{1,3\},\{1,4\}$ & $\{1,2\},\{1,3\},\{2,3\}$ & $\{1,2\},\{1,3\},\{3,4\}$ 
\end{tabular}
\caption{Colorings of $\mathbf{B}_4-\{S_1,S_2,S_3,[4]\}$ with $|S_1|=|S_2|=|S_3|=2$.}
    \label{32-subsets}
\end{figure}

\section{Boolean Rainbow Ramsey Numbers}

In this section, we study the Boolean rainbow Ramsey numbers. In addition to the proof of  Theorem~\ref{RRJA}, we will present an improvement of Theorem~\ref{RRPA} with $\mathbf{A}_2$. From the first part of Theorem~\ref{RRPA}, we have 
\[ RR(\mathbf{P},\mathbf{A}_2)\le \lfloor\lambda^*_{\max}(\mathbf{P})\rfloor+2.
\]
However, even for a small poset, like $\mathbf{B}_2$, the exact value of $\lambda^*_{\max}(\mathbf{P})$ is unknown.
Nevertheless, we manage to determine $RR(\mathbf{P},\mathbf{A}_2)$  in terms of the 2-dimension of $\mathbf{P}$ and the number of extremal elements in $\mathbf{P}$.
Let the {\em minimum element} and the {\em maximum  element} of a poset $\mathbf{P}$ be the element $x$ such that $x \le z$ and
the element $y$ such that $z \le y$ for all $z \in \mathbf{P}$, respectively. The two extremal elements do not necessarily exist in every poset.
Let $m(\mathbf{P})$ be the number of the extremal elements in the poset $\mathbf{P}$. It is obvious $m(\mathbf{P})\in\{0,1,2\}$.
The next proposition determines $RR(\mathbf{P}, \mathbf{A}_2)$.

\begin{proposition}\label{RRPA2}
For any poset $\mathbf{P}$ on at least 2 elements,
\[
RR(\mathbf{P}, \mathbf{A}_2) = \dim_2 (\mathbf{P}) + m(\mathbf{P}).\]
\end{proposition}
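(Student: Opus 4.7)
The plan is to prove the two inequalities separately. A useful preliminary observation, easy to verify by translation, is that if $\mathbf{P}$ has a minimum element $p_0$ and $\phi : \mathbf{P} \to \mathbf{B}_d$ is any embedding with $d = \dim_2(\mathbf{P})$, then $\phi(p_0) = \varnothing$: otherwise, setting $X = \phi(p_0)$, the shifted map $p \mapsto \phi(p) \setminus X$ would embed $\mathbf{P}$ into $\mathbf{B}_{d - |X|}$, contradicting the definition of $\dim_2(\mathbf{P})$. The analogous statement forces the maximum element, when it exists, to map to $[d]$.

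For the lower bound $RR(\mathbf{P},\mathbf{A}_2) > \dim_2(\mathbf{P}) + m(\mathbf{P}) - 1$, I exhibit, for each $m(\mathbf{P}) \in \{0,1,2\}$, a coloring of $\mathbf{B}_{d + m(\mathbf{P}) - 1}$ avoiding both a monochromatic $\mathbf{P}$ and a rainbow $\mathbf{A}_2$. When $m(\mathbf{P}) = 0$, a single color on $\mathbf{B}_{d-1}$ suffices since the lattice does not even contain $\mathbf{P}$. When $m(\mathbf{P}) = 1$ (say $\mathbf{P}$ has a minimum), color $\varnothing$ with one color and everything else in $\mathbf{B}_d$ with another; any copy of $\mathbf{P}$ is forced to use $\varnothing$ by the preliminary observation. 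When $m(\mathbf{P}) = 2$, give $\varnothing$ and $[d+1]$ their own colors and use a third color for the rest of $\mathbf{B}_{d+1}$; a short inclusion argument using $|\phi(\max\mathbf{P})| - |\phi(\min\mathbf{P})| \ge d$ inside $\mathbf{B}_{d+1}$ shows $\phi(\min\mathbf{P}) = \varnothing$ or $\phi(\max\mathbf{P}) = [d+1]$, so every copy hits one of the singleton classes. In every construction the non-majority elements are $\varnothing$ or $[n]$, which are comparable to everything, so no rainbow $\mathbf{A}_2$ is created.

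The upper bound rests on the following structural lemma: any coloring of $\mathbf{B}_n$ with $n \ge 2$ and no rainbow $\mathbf{A}_2$ assigns a single color to all elements of $\mathbf{B}_n \setminus \{\varnothing,[n]\}$. Since incomparable elements of different colors already form a rainbow $\mathbf{A}_2$, it suffices to show that $\mathbf{B}_n \setminus \{\varnothing,[n]\}$ is connected under the incomparability relation. Given $X \subsetneq Y$ in this set, choosing $i \in X$, $j \in Y \setminus X$, $k \in [n] \setminus Y$ and setting $Z := (X \setminus \{i\}) \cup \{k\}$ produces a set incomparable to both $X$ and $Y$, bridging them.

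With this lemma the upper bound becomes a short case analysis on which of $\varnothing, [n]$ lies in the ``big'' color class $C_*$. Starting from an embedding $\psi : \mathbf{P} \to \mathbf{B}_d$, I translate it to an embedding $\phi : \mathbf{P} \to C_* \subseteq \mathbf{B}_n$ with $n = d + m(\mathbf{P})$ using the extra $m(\mathbf{P})$ coordinates: if $\varnothing \notin C_*$, replace $\psi(p)$ by $\psi(p) \cup \{d+1\}$ so that every image contains $d+1$ and is hence nonempty; if $[n] \notin C_*$, keep the image inside a proper initial segment so that no image equals $[n]$. The main obstacle is the case $m(\mathbf{P}) = 2$ with both $\varnothing$ and $[d+2]$ outside $C_*$; here $\phi(p) := \psi(p) \cup \{d+1\}$ inside $\mathbf{B}_{d+2}$ simultaneously avoids $\varnothing$ (every image contains $d+1$) and $[d+2]$ (no image contains $d+2$), which completes the argument.
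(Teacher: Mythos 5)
Your proposal is correct and follows essentially the same route as the paper: the same extremal-set coloring for the lower bound (a copy of $\mathbf{P}$ avoiding $\varnothing$ and $[n-1]$ would embed into a Boolean lattice of dimension smaller than $\dim_2(\mathbf{P})$), and for the upper bound the same key observation that a coloring with no rainbow $\mathbf{A}_2$ must be constant on $\mathbf{B}_n\setminus\{\varnothing,[n]\}$, followed by shifting an embedding of $\mathbf{P}$ off the trivial sets. Your incomparability-graph connectivity argument is just a more explicit version of the paper's pairing of $\mathbf{B}_n^{i,j}$ with $\mathbf{B}_n^{j,i}$; the only point left implicit is the easy case $m(\mathbf{P})=0$ of the upper bound, where one should note that no element of a poset without extremal elements can map to $\varnothing$ or $[n]$.
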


\begin{proof}
Let $n = \dim_2 (\mathbf{P}) + m(\mathbf{P})$.
For the lower bound, define a coloring $c$ of $\mathbf{B}_{n-1}$ by
\[
c(X)=\left\{
\begin{array}{ll}
1,& X=\varnothing,\\
2,& X=[n-1],\\
3,& \mbox{otherwise}.
\end{array}
\right.
\]
Since no set other than $\varnothing$ and $[n-1]$ is colored with 1 or 2,
there is no monochromatic $\mathbf{P}$ of color 1 or 2.
Let us see the elements of color 3.
If $m(\mathbf{P}) = 0$, then $n=\dim_2 (\mathbf{P})$.  
There is no poset $\mathbf{P}$ contained in $\mathbf{B}_{n-1} $ by the definition of $\dim_2 (\mathbf{P})$.
If $m(\mathbf{P})=1$, then $n-1 = \dim_2 (\mathbf{P})$.
We may assume that $\mathbf{P}$ is a poset which has the minimum but not the maximum element.
Suppose there is a $\mathbf{P}$ of color 3 contained in $\mathbf{B}_{n-1}$ with some $S\in \mathbf{B}_{n-1}$ as the minimum element of $\mathbf{P}$.
Since $S \neq\varnothing$, we may assume $n-1 \in S$. Then $n-1 \in X$ for all $X \in \mathbf{P}$.
Let $\mathbf{P}' = \{ X-\{n-1\}\mid X \in\mathbf{P} \}$. Then $\mathbf{P}'$ is isomorphic to $\mathbf{P}$, and every element in $\mathbf{P}'$ is a subset of $[n-2]$.
This implies $\mathbf{P}$ is contained in $\mathbf{B}_{n-2}$, which contradicts the assumption $\dim(\mathbf{P})=n-1$.
The case of $\mathbf{P}$ containing a maximum element is analogous. For $m(\mathbf{P})= 2$, again suppose that there is a $\mathbf{P}$ of color 3. Let $Y$ and $Z$ be the maximum and minimum elements in the monochromatic $\mathbf{P}$, respectively.
Since $\varnothing\neq Z\subset Y\neq [n-1] $,
$\mathbf{P}$ is contained in $\mathbf{B}_{n-1}^{i,j}$ for some $i,j$.
However, $\mathbf{B}_{n-1}^{i,j}$ is isomorphic to $\mathbf{B}_{n-3}$ and $n-3=\dim_2 (\mathbf{P})-1$. This is  a contradiction.
As a consequence, there is no $\mathbf{P}$ of color 3. On the other hand, any two sets of distinct colors are comparable. So, there is no rainbow $\mathbf{A}_2$ contained in $\mathbf{B}_{n-1}$ as well.

For the upper bound, we assume that $c'$ is a coloring of $\mathbf{B}_n$ and does not contain a rainbow $\mathbf{A}_2$. Then we prove there is a monochromatic $\mathbf{P}$. 
To avoid a rainbow $\mathbf{A}_2$, 
we cannot have a set in $\mathbf{B}_{n}^{i,j}$ and a set in $\mathbf{B}_{n}^{j,i}$ colored differently for all $i\neq j\in[n]$.
This unifies the colors of all nontrivial subsets of $[n]$.
If $m(\mathbf{P}) = 0$, then $n =\dim_2 (\mathbf{P})$ and $\mathbf{B}_n$ contains $\mathbf{P}$ as a subposet.
Since $\mathbf{P}$ does not have the extremal elements, all sets forming $\mathbf{P}$ are nontrivial and have the same color.
We have a monochromatic $\mathbf{P}$ contained in $\mathbf{B}_n$.
For $m(\mathbf{P}) = 1$, we may assume that $\mathbf{P}$ has the minimum element. As $n>\dim_2(\mathbf{P})$, $\mathbf{B}_n -\{ \varnothing\}$ contains $\mathbf{B}_{\dim_2 (\mathbf{P})}$ and thus $\mathbf{P}$ as a subposet.
Since $\mathbf{P}$ does not have the maximum element, the poset $\mathbf{P}$ does not contain $[n]$ as its element. So it is monochromatic.
For $m(\mathbf{P})=2$, again we have that  $\mathbf{B}_n-\{\varnothing,[n]\}$ contains $\mathbf{B}_{\dim(\mathbf{P})}$ and hence  $\mathbf{P}$ as a subposet. 

So we obtain a monochromatic $\mathbf{P}$ as desired. Conclusively, $RR(\mathbf{P}, \mathbf{A}_2) = \dim_2 (\mathbf{P}) +m(\mathbf{P})$.
\end{proof}

\noindent{\em Proof of Theorem~\ref{RRJA}.}
We first demonstrate a coloring method in~\cite{CCLL} that provides a lower bound of $RR(\mathbf{V}_{m,n}, \mathbf{A}_k)$.
For any $X\in\mathbf{B}_{n(k-1)+1}$, color $X$ with $\lceil \frac{|X|}{n} \rceil$ if $X\not\in \{\varnothing,[n(k-1)+1]\}$, and with $k$ otherwise. 
Observe that there are only $k$ colors for this coloring and every monochromatic chain contains at most $n$ elements. A family of $k$ subsets with $k$ distinct colors must contain either $\varnothing$ or $[n(k-1)+1]$, 
so it cannot be an antichain.
On the other hand, since $\mathbf{V}_{m,n}$ contains a chain on $n+1$ elements, there is no monochromatic $\mathbf{V}_{m,n}$ as well.
Thus $RR(\mathbf{V}_{m,n}, \mathbf{A}_k) > n(k-1)+1$.

Next we prove $RR(\mathbf{V}_{m,n}, \mathbf{A}_k) \le n(k-1)+2$ by induction on $k$.
For $k=2$, $RR(\mathbf{V}_{m,n}, \mathbf{A}_2) = \dim_2 (\mathbf{V}_{m,n}) + m(\mathbf{V}_{m,n})$ by Proposition~\ref{RRPA2}.
Since $\dim_2 (\mathbf{V}_{m,n})= n+1$ and $m(\mathbf{V}_{m,n}) = 1$, 
$RR(\mathbf{V}_{m,n}, \mathbf{A}_{2}) = n+2$ satisfies the inequality. Suppose the inequality holds for some $k-1 \ge 2$, that is $RR(\mathbf{V}_{m,n}, \mathbf{A}_{k-1}) \le n(k-2)+2$.
Now consider any coloring $c$ of $\mathbf{B}_{n(k-1)+2}$.
Assume that $\mathbf{B}_{n(k-1)+2}$ does not contain a monochromatic $\mathbf{V}_{m,n}$ for the coloring $c$.
We are going to show there is a rainbow $\mathbf{A}_{k}$.

{\em Claim:} There exists some $\mathbf{B}_{n(k-1)+2}^{i,\ell}$ with at least $k$ colors on its elements.

Observe that for any given $i\in [n(k-1)+2]$, the family 
\[\mathcal{F}=\{X \subseteq [n(k-1)+2] \mid i\in X \}\]
is isomorphic to $\mathbf{B}_{n(k-1)+1}$.
Therefore, either there are at least $k$ colors appearing in $\mathcal{F}$, or there is a monochromatic $\mathbf{V}_{m,n}$ by Theorem~\ref{IT}. We have the former case by our assumption of $c$.
Moreover, suppose there are exactly $k$ colors $1,\ldots,k$ on the subsets in $\mathcal{F}$, and $[n(k-1)+2]$ is the only subset which receives color $k$ under $c$. We may replace it with any color less than $k$ to get a $(k-1)$-coloring of $\mathbf{B}_{n(k-1)+1}$. 
By Theorem~\ref{IT} again, $\mathcal{F}$ contains a monochromatic $\mathbf{V}_{m,n}$ as a subposet. 
Since $[n(k-1)+2]$ cannot be an element in $\mathbf{V}_{m,n}$, this monochromatic $\mathbf{V}_{m,n}$ in $\mathcal{F}$ already exists in the original coloring $c$.
This contradicts our assumption.
As a conclusion, there are at least $k$ colors on the subsets in $\mathcal{F}-\{[n(k-1)+2]\}$.
Fix some $j \neq i$. 
Suppose there are at most $k-1$ colors on $\mathbf{B}_{n(k-1)+2}^{i,j}$.
Pick $X \in \mathcal{F} -\{[n(k-1)+2]\}$ with $c(X)$ not appearing on the elements in  $\mathbf{B}_{n(k-1)+2}^{i,j}$.
Then $X$ belongs to  $\mathbf{B}_{n(k-1)+2}^{i,\ell}$ for some $\ell \ne i,j$.
For each subset $W \in \mathbf{B}_{n(k-1)+2}^{i,j} \cap \mathbf{B}_{n(k-1)+2}^{i,\ell}$, we have $i \in W$, $ j \not\in W$, and $\ell \not\in W$.
More precisely, 
\[
\mathbf{B}_{n(k-1)+2}^{i,j} \cap \mathbf{B}_{n(k-1)+2}^{i,\ell} = \{W \mid \{i\} \subset W \subset [n(k-1)+2] -\{j,\ell\} \},
\] and it is isomorphic to $\mathbf{B}_{n(k-1)-1}$.
Since $n(k-1)-1\ge n(k-2)+1$ for $n\ge 2$, by Theorem \ref{IT} and the assumption of $c$, there are at least $k-1$ colors in $\mathbf{B}_{n(k-1)+2}^{i,j} \cap \mathbf{B}_{n(k-1)+2}^{i,\ell}$.
Thus, the colors in $\mathbf{B}_{n(k-1)+2}^{i,j} \cap \mathbf{B}_{n(k-1)+2}^{i,\ell}$ are the same as those in $\mathbf{B}_{n(k-1)+2}^{i,j}$.
On the other hand,  $\mathbf{B}_{n(k-1)+2}^{i,\ell}$ contains $\mathbf{B}_{n(k-1)+2}^{i,j} \cap \mathbf{B}_{n(k-1)+2}^{i,\ell}$ and 
$X$. So there are $k$ colors in $\mathbf{B}_{n(k-1)+2}^{i,\ell}$, and the claim is proved.

Finally, since $n(k-1)\ge n(k-2)+2$ for $n\ge 2$, by inductive hypothesis, we have that $\mathbf{B}_{n(k-1)+2}^{\ell,i}$ contains $\mathbf{B}_{n(k-2)+2}$ and hence a rainbow $\mathbf{A}_{k-1}$.
We now pick an element in $\mathbf{B}_{n(k-1)+2}^{i,\ell}$ whose color does not appear in the former rainbow $\mathbf{A}_{k-1}$. This element together with the  rainbow $\mathbf{A}_{k-1}$ form a rainbow $\mathbf{A}_{k}$ in $\mathbf{B}_{n(k-1)+2}$.
\qed

\section{Concluding Remarks and Open Problems}

The posets $\mathbf{P}$ with $\dim_2(\mathbf{P})=2$ are $\mathbf{C}_3$ the chain on three elements,  $\mathbf{A}_2$, $\mathbf{V}_{1,1}$ and its dual, and the Boolean lattice $\mathbf{B}_2$.
The only unknown $R_k(\mathbf{P})$ is when $\mathbf{P}=\mathbf{B}_2$. Axenovich and Walzer~\cite{AW} showed $R_2(\mathbf{B}_2)=4$. Indeed, we can show that $R_3(\mathbf{B}_2)=6$ using the facts of the Boolean rainbow Ramsey numbers. However, the argument we used to derive it is relatively complicated and cannot be generalized further. So we only publish it on arXiv. Interested readers can see arXiv:1909.11370 for the details.
Anyway, it is nature to investigate $R_k(\mathbf{B}_2)$ after we have settled $R_k(\mathbf{V}_{1,1})$.
Hopefully, the result of $R_k(\mathbf{V}_{1,1})$ could do some help to this problem.

Recall that we introduced the weak subposets and mentioned a reference~\cite{CS} of the Boolean Ramsey number for the weak subposets.   
Grosz {\em et al.}~\cite{GMT} used  $R_w(\mathbf{P}_1,\mathbf{P}_2\ldots, \mathbf{P}_k)$ to denote the weak version of Boolean Ramsey number. It is clear that 
\begin{equation}\label{INEQ}
R_w(\mathbf{P}_1,\mathbf{P}_2\ldots, \mathbf{P}_k)\le R(\mathbf{P}_1,\mathbf{P}_2\ldots, \mathbf{P}_k)    
\end{equation} 
from the definitions. 
If all the $\mathbf{P}_i$'s are chains, then equality in (\ref{INEQ}) holds.
Observe that the colorings we used to derive the lower bounds for $R_k(\mathbf{V}_{1,1})$ and $R(\mathbf{V}_{m,m},\mathbf{V}_{n,n})$ also avoid the weak $V$-shaped posets. So $R_k(\mathbf{V}_{1,1})$ and $R(\mathbf{V}_{m,m},\mathbf{V}_{n,n})$ are the examples for which the equality in (\ref{INEQ}) holds. 
On the other hand, we have $R_k(\mathbf{V}_{m,n})=nk+1$ for $m<n$ which implies $R_2(\mathbf{V}_{1,2})=5$. 
However, for the weak version we have 
$R_w(\mathbf{V}_{1,2},\mathbf{V}_{1,2})=4$.
This is because for every two coloring of $\mathbf{B}_4$, the nonempty subsets of $[4]$ that have the same color as $\varnothing$ must be a chain or an antichain. Thus, we can find a copy of monochromatic $\mathbf{B}_2$ in the subposet of $\mathbf{B}_4$ formed by removing all elements of the same color as $\varnothing$. Note that $\mathbf{B}_2$ contains $\mathbf{V}_{1,2}$ as a weak subposet. So $R_w(\mathbf{V}_{1,2},\mathbf{V}_{1,2})\le 4$.
Meanwhile, it is easy to color $\mathbf{B}_3$ by two colors so that no monochromatic weak $\mathbf{V}_{1,2}$ exists. In general, given $\mathbf{P}_i=\mathbf{V}_{m_i,n_i}$ with $1\le m_i< n_i$ for $1\le i\le k$, one can show 
$
R_w(\mathbf{P}_1,\mathbf{P}_2,\ldots, \mathbf{P}_k)
<R(\mathbf{P}_1,\mathbf{P}_2,\ldots, \mathbf{P}_k)$. We believe that Theorem~\ref{MT} can be generalized to more posets and the following is true.

\begin{conjecture} For $\mathbf{P}_{i}=\mathbf{V}_{m_i,m_i}$, 
\[
R_w(\mathbf{P}_1,\mathbf{P}_2,\ldots, \mathbf{P}_k)
=R(\mathbf{P}_1,\mathbf{P}_2,\ldots, \mathbf{P}_k)
=m_1+m_2+\cdots+m_k+1.\]
\end{conjecture}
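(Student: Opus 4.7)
The plan is to establish the two-sided equality by separately proving $R_w \ge m_1 + \cdots + m_k + 1$ and $R \le m_1 + \cdots + m_k + 1$; together with the trivial inequality $R_w \le R$, both numbers will then equal $m_1 + \cdots + m_k + 1$.

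For the lower bound, I would use the layer coloring of $\mathbf{B}_{m_1 + \cdots + m_k}$: assign color $i$ to $X$ whenever $m_1 + \cdots + m_{i-1} \le |X| < m_1 + \cdots + m_i$ (the top element handled by any convenient rule). Color class $i$ then occupies exactly $m_i$ consecutive levels, so every monochromatic chain has at most $m_i$ elements. Since a weak copy of $\mathbf{V}_{m_i, m_i}$ contains a chain of $m_i + 1$ elements (its minimum together with one of the two chains), no monochromatic weak $\mathbf{V}_{m_i, m_i}$ of color $i$ exists, giving $R_w > m_1 + \cdots + m_k$.

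For the upper bound I would induct on $k$, with the base case $k = 2$ supplied by Theorem~\ref{MT}. Given a $k$-coloring $c$ of $\mathbf{B}_N$ with $N = m_1 + \cdots + m_k + 1$, set $s = c(\varnothing)$. If some co-atom $\{j\}^c$ shares color $s$, I would generalize Lemma~\ref{VMN}: for an auxiliary index $i \ne j$, if both $\mathbf{B}_N^{i,j} \cap \mathcal{F}_s$ and $\mathbf{B}_N^{j,i} \cap \mathcal{F}_s$ contain chains of $m_s$ elements, the two chains together with $\varnothing$ form a $\mathbf{V}_{m_s, m_s}$ of color $s$; otherwise, using that $\{j\}^c$ is the local maximum of $\mathbf{B}_N^{i,j}$ and lies in $\mathcal{F}_s$, every length-$(m_s - 1)$ chain of $\mathcal{F}_s$ in $\mathbf{B}_N^{i,j}$ must pass through $\{j\}^c$, so chains of $\mathcal{F}_s - \{\{j\}^c\}$ in $\mathbf{B}_N^{i,j}$ have length at most $m_s - 2$. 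Mirsky's theorem plus $m_s - 2$ iterations of Lemma~\ref{antichain} then cut out a Boolean sublattice isomorphic to $\mathbf{B}_{N - m_s} = \mathbf{B}_{\sum_{l \ne s} m_l + 1}$, on which (after recoloring $\{j\}^c$ to any color $\ne s$) the inductive hypothesis for $k - 1$ colors yields a monochromatic $\mathbf{V}_{m_l, m_l}$ of color $l \ne s$. This $\mathbf{V}$ cannot use $\{j\}^c$, since $\{j\}^c$ is the local maximum whereas $\mathbf{V}_{m_l, m_l}$ has two incomparable maxima, so it is already monochromatic in the original coloring $c$.

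If no co-atom has color $s$, then since $N \ge k + 1 > k - 1$, pigeonhole supplies a color $t \ne s$ and two co-atoms $\{j_1\}^c, \{j_2\}^c$ of color $t$. The aim here is to produce a monochromatic $\mathbf{V}_{m_t - 1, m_t - 1}$ of color $t$ whose two chains live in the opposite wings $\mathbf{B}_N^{j_2, j_1}$ and $\mathbf{B}_N^{j_1, j_2}$; attaching $\{j_1\}^c$ and $\{j_2\}^c$ as the two new top elements then yields the desired $\mathbf{V}_{m_t, m_t}$ of color $t$, since (as in the closing construction of Case~2 of Lemma~\ref{VMN}) the elements of the first chain contain $j_2$ but not $j_1$, and vice versa, forcing the required incomparabilities. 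To extract this wing-structured $\mathbf{V}$ I would perform a dual analysis to the previous sub-case, now applied to the sub-poset of subsets missing both $j_1$ and $j_2$ together with the two wings, invoking the inductive hypothesis with $m_t$ replaced by $m_t - 1$. The main obstacle is precisely this second sub-case: ensuring that the $\mathbf{V}_{m_t - 1, m_t - 1}$ actually splits across the two wings rather than landing inside a single one, which is likely to require a refined inductive statement tracking this wing property, or a more delicate repeated application of Lemma~\ref{antichain} that controls which wings the surviving structure occupies.
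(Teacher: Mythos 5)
This statement appears in the paper only as a conjecture; the authors prove just the case $k=2$ (Theorem~\ref{MT}), so there is no proof of the general statement to compare against, and your proposal does not close it. Your lower bound is fine and matches the paper's layer coloring (the paper itself remarks that this coloring also excludes the weak $V$-shaped posets, which gives the $R_w$ half). The problems are in the inductive step of the upper bound, in both branches. In your Case A (some co-atom $\{j\}^c$ has color $s=c(\varnothing)$, $N=m_1+\cdots+m_k+1$), you treat ``otherwise'' as if the deficient wing were always $\mathbf{B}_N^{i,j}$, the wing whose maximum is $\{j\}^c$. But the negation of ``both wings contain an $m_s$-chain of $\mathcal{F}_s$'' allows the deficient wing to be $\mathbf{B}_N^{j,i}$, whose maximum is $\{i\}^c$; if $\{i\}^c\notin\mathcal{F}_s$ you can only strip $m_s-1$ antichains there, landing in $\mathbf{B}_{\sum_{l\neq s}m_l}$ --- one dimension short of the $\sum_{l\neq s}m_l+1$ that the inductive hypothesis requires. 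For $k=2$ the paper rescues exactly this situation (Case~2 of Lemma~\ref{VMN}) with the concatenation trick, which works there only because every co-atom other than the unique one of color $1$ is forced to carry the single remaining color; for $k\ge 3$ no such guarantee exists.

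The second, deeper gap is the one you flag yourself in Case B: the concatenation needs the two co-atoms $\{w\}^c,\{z\}^c$ determined by the monochromatic $\mathbf{V}_{m_t-1,m_t-1}$ (via $w\in W_1-Z_{m_t-1}$ and $z\in Z_1-W_{m_t-1}$) to have color $t$, and an unrestricted inductive hypothesis gives no control over which $w,z$ arise. Note also that your proposed reduction is again off by one in dimension: the subsets avoiding both $j_1$ and $j_2$ form only a $\mathbf{B}_{\sum_l m_l-1}$, whereas the Ramsey number with $m_t$ replaced by $m_t-1$ is $\sum_l m_l$; adjoining ``the two wings'' does not yield a Boolean lattice, so the inductive hypothesis does not directly apply to that union. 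What is missing is a genuinely stronger inductive statement --- one that either keeps the monochromatic $V$ away from the top of the cube or produces it with prescribed wing structure --- and this is presumably exactly why the authors left the statement as a conjecture rather than a theorem.
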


\end{document}